\newcommand{\nc}{\newcommand}
\nc{\dmo}{\DeclareMathOperator}
\dmo{\ra}{\rightarrow}
\dmo{\N}{\mathbb{N}}
\dmo{\Z}{\mathbb{Z}}
\dmo{\R}{\mathbb{R}}
\dmo{\C}{\mathcal{C}}
\dmo{\AC}{\mathcal{AC}}
\dmo{\Mod}{Mod}
\dmo{\PMod}{PMod}
\dmo{\B}{B}
\dmo{\PB}{PB}
\dmo{\I}{\mathcal{I}}
\dmo{\el}{\ell_{\C}}
\dmo{\NN}{\mathcal{N}}
\nc{\nt}{\newtheorem}
\newtheorem{thm}{{\bf Theorem}}[section]
\newtheorem{lem}[thm]{{\bf Lemma}}
\newtheorem{prop}[thm]{{\bf Proposition}}
\numberwithin{equation}{section}
\title[Translation lengths of Torelli groups and pure braid groups]{Minimal asymptotic translation lengths of Torelli groups and pure braid groups on the curve graph}
\author[H. Baik]{Hyungryul Baik}
\address{%
		Department of Mathematical Sciences, KAIST,
		291 Daehak-ro Yuseong-gu, Daejeon, 34141, South Korea 
}
\email{%
        hrbaik@kaist.ac.kr
}
\author[H. Shin]{Hyunshik Shin}
\address{%
        Department of Mathematical Sciences, KAIST,
		291 Daehak-ro Yuseong-gu, Daejeon, 34141, South Korea 
}
\email{%
        hshin@kaist.ac.kr
}
\begin{document}
\begin{abstract}
In this paper, we show that the minimal asymptotic translation length of 
the Torelli group $\I_g$ of the surface $S_g$ of genus $g$
on the curve graph asymptotically behaves like $1/g$,
contrary to the mapping class group $\Mod(S_g)$, which behaves like $1/g^2$.
We also show that the minimal asymptotic translation length of 
the pure braid group $\PB_n$ on the curve graph asymptotically behaves like $1/n$,
contrary to the braid group $\B_n$, which behaves like $1/n^2$. 
\end{abstract}

\maketitle

%%%%%%%%%%%%%%%%%%%%%%%%%%%%%%%%%%%%%%%%%%%%%%%%%%%%%%%%%%%%%%%%%%%%%%
%
%	Introduction
%
%%%%%%%%%%%%%%%%%%%%%%%%%%%%%%%%%%%%%%%%%%%%%%%%%%%%%%%%%%%%%%%%%%%%%%

\section{Introduction}	\label{section:introduction}
Let $S=S_{g,n}$ be an orientable surface of genus $g$ with $n$ punctures.
The \textit{mapping class group} of $S$, denoted by $\Mod(S)$,
is the group of isotopy classes of orientation-preserving homeomorphisms of $S$.
%that fix the boundary pointwise.
An element of $\Mod(S)$ is called a mapping class.
The curve graph $\C(S)$ of $S$ is one of the important spaces upon which 
$\Mod(S)$ is acting. The \textit{curve graph} is a simplicial graph whose vertices are 
isotopy classes of essential simple closed curves in $S$ and two vertices are joined by
an edge if they are realized by a pair of disjoint curves.
Assigning each edge length 1 induces a path metric $d_{\C}(\cdot,\cdot)$ on $\C(S)$.
Then $\Mod(S)$ acts on $\C(S)$ by isometries.
For an element $f \in \Mod(S)$,
we define the \textit{asymptotic translation length} (also known as
\textit{stable translation length}) of $f$ by
$$ \ell_{\C} (f) = \liminf_{j \ra \infty} \frac{d_{\C}(\alpha, f^j(\alpha))}{j},$$
where $\alpha$ is an element in $\C(S)$. Note that $\ell_{C}(f)$ is independent of
the choice of $\alpha$,
and $\ell_{C}(f^m) = m \ell_{C}(f)$ for each $m \in \N$.

In this paper, We always assume that the complexity of $S$ is $\xi(S) = 3g-3+n \geq 2$.
Masur and Minsky \cite{MasurMinsky99} showed that $C(S)$ is Gromov-hyperbolic 
and show that $\el(f)>0$ if and only if $f$ is a pseudo-Anosov mapping class.
Bowditch \cite{Bowditch08} showed that there exists some positive integer $m$,
depending only on $S$, such that for any pseudo-Anosov $f \in \Mod(S)$,
$f^m$ acts by translation along a geodesic axis in $\C(S)$.
As a consequence, $\el(f)$ is a rational number with bounded
denominator. There has been work of many authors on estimating
asymptotic translation lengths on curve graphs. For instance, see 
\cite{FarbLeiningerMargalit08, GadreTsai11, GadreHironakaKentLeininger13, Valdivia14, AougabTaylor15, KinShin17, Valdivia17,
  BaikShinWu18} and references therein. 

One can think of the behavior of the minimal asymptotic translation length on a closed
surface $S_g$ of genus $g$. For any subgroup $H < \Mod(S)$, let us define
$$L_{\C}(H) = \min \{ \el(f) | f \in H, \textrm{pseudo-Anosov} \}.$$
Gadre and Tsai \cite{GadreTsai11} proved that
$$L_{\C}(\Mod(S_g)) \asymp \frac{1}{g^2},$$
where $F(g) \asymp G(g)$ implies that there exist positive constants $C$ and $D$ 
such that $C G(g) \leq F(g) \leq D G(g)$.
The second author and Kin \cite{KinShin17} showed that 
the minimal asymptotic translation lengths of
hyperelliptic mapping class group, handlebody group, and hyperelliptic handlebody group
of $S_g$ also behave like $1/g^2$.

The Torelli group $\I_g$ is another important subgroup of $\Mod(S_g)$.
Farb--Leininger--Margalit \cite{FarbLeiningerMargalit08} proved that 
$L_{\C}(\I_g) \ra 0$ as $g \ra \infty$ but the exact asymptote is not known.
It turns out that the behavior of $L_{\C}(\I_g)$ is different
from that of $L_{\C}(\Mod(S_g))$.

\begin{thm}	\label{thm:Torelli}
For $g \geq 2$, we have $$L_{\C}(\I_g) \asymp \frac{1}{g}.$$
\end{thm}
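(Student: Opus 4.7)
The plan is to prove the two bounds separately. For the upper bound $L_{\C}(\I_g) \leq D/g$, the goal is to exhibit, for each $g \geq 2$, an explicit pseudo-Anosov element $f_g \in \I_g$ with $\el(f_g) \leq D/g$. A natural approach is to adapt the Thurston (or Penner) construction: choose two filling multicurves $A$ and $B$ on $S_g$, and build $f_g$ as a product of Dehn twists along the components of $A$ and $B$. To ensure $f_g \in \I_g$, the building blocks can be chosen to be \emph{bounding pair maps} and Dehn twists along separating curves, which together generate $\I_g$. The resulting map is pseudo-Anosov by the Thurston criterion. One then uses the standard inequality $\el(f_g) \leq \log \lambda(f_g)$, where $\lambda(f_g)$ is the dilatation, and estimates $\log\lambda(f_g)$ via the Perron--Frobenius eigenvalue of the associated Thurston matrix. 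When $A \cup B$ has $\asymp g$ components arranged in a chain-like pattern with bounded local intersection data, a standard eigenvalue bound yields $\log \lambda(f_g) = O(1/g)$, giving the desired upper bound.

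For the lower bound $L_{\C}(\I_g) \geq C/g$, the aim is to show that every pseudo-Anosov $f \in \I_g$ satisfies $\el(f) \geq C/g$. The plan is to leverage the Torelli condition: since $f$ acts trivially on $H_1(S_g;\Z)$, every orbit $\{f^n(\alpha)\}$ of a simple closed curve stays in the homology class of $\alpha$ (up to sign), and in particular the corresponding quasi-geodesic in $\C(S_g)$ lies in a single ``homological stratum.'' A homological constraint of this sort should obstruct $f$ from moving curves slowly across long distances in $\C(S_g)$, and the obstruction should scale like $1/g$ rather than $1/g^2$. Concretely, I would compare $f$ to a monodromy in a fibered face of a hyperbolic 3-manifold (following Fried and the Gadre--Tsai framework) and use the Torelli condition to cut out a proper subfamily of the fibered face on which translation lengths are forced to be at least $C/g$. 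An alternative route is via Masur--Minsky subsurface projections: small $\el(f)$ forces a large projection to some proper essential subsurface $Y \subsetneq S_g$, and the Torelli restriction on $f|_Y$ should prevent $Y$ from being small enough to allow $\el(f) \ll 1/g$.

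The hard step will be the lower bound. The Gadre--Tsai argument for $\Mod(S_g)$ only yields $\el \geq C/g^2$, so obtaining the sharper bound $C/g$ for $\I_g$ requires a genuinely new ingredient that exploits the $H_1$-triviality. Quantifying how the homological constraint tightens the lower bound by a factor of $g$, in a way compatible with the Gromov-hyperbolic geometry of $\C(S_g)$, is the technical heart of the proof and where I would expect to spend most of the work.
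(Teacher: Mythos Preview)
Your proposal has a genuine gap in each direction.

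\textbf{Upper bound.} The route through dilatation cannot work for the Torelli group. Farb--Leininger--Margalit proved that there is a universal constant $c>0$ with $\log\lambda(f)\ge c$ for \emph{every} pseudo-Anosov $f\in\I_g$, independent of $g$. Thus no sequence $f_g\in\I_g$ can satisfy $\log\lambda(f_g)=O(1/g)$, and the inequality $\el(f_g)\le C\log\lambda(f_g)$ yields only a constant upper bound, not $D/g$. The paper avoids dilatation entirely: it takes filling multicurves $A=\{a_0,\dots,a_n\}$, $B=\{b_0,\dots,b_n\}$ with $n=\lfloor g/2\rfloor$, all of whose components are \emph{separating}, sets $f_g=T_B^{-1}T_A$ (Penner's construction, hence pseudo-Anosov and in $\I_g$), and then tracks supports directly in the curve graph. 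Because $a_i$ meets only $b_{i-1}$ and $b_i$ in the chain, one has
\[
f_g^{\,j}(a_i)\ \subset\ \NN(b_{i-j}\,a_{i-j+1}\cdots a_{i+j-1}\,b_{i+j-1}),
\]
so after $j\asymp g/4$ iterates the image is still disjoint from a fixed curve $\gamma$ at the end of the surface. Hence $d_\C(a_i,f_g^{\,j}(a_i))\le 2$ and $\el(f_g)\le 2/j\le 8/(g-12)$. The point is that small curve-graph translation length is obtained despite bounded-away-from-one dilatation; the two quantities decouple for Torelli.

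\textbf{Lower bound.} Your suggested mechanisms (homological strata, fibered faces, subsurface projections) are not what drives the improvement from $1/g^2$ to $1/g$, and you do not isolate a usable statement. The paper's argument is short and specific: for $f\in\I_g$ the induced map on each $H_i(S_g;\R)$ is the identity, so the Lefschetz number is $L(f)=1-2g+1=2-2g<0$. Tsai's lemma then says the Markov/transition matrix $M_{\mathcal R}$ on real branches of a Bestvina--Handel train track has a positive diagonal entry, i.e.\ one may take $q=1$ in the Gadre--Tsai estimate. With $r\le 9|\chi(S_g)|$ real branches this gives $w\le 48|\chi(S_g)|$ and hence $\el(f)\ge 1/(96g-96)$. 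In the general $\Mod(S_g)$ case the best one can say is $q\lesssim g$, which is exactly where the extra factor of $g$ in Gadre--Tsai's $1/g^2$ comes from; the Torelli hypothesis kills that factor via the Lefschetz number, not via any of the routes you outlined.
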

\noindent
The lower bound for Theorem \ref{thm:Torelli} is a direct consequence of the result by Tsai
\cite[Lemma 3.1]{Tsai09} together with Bestvina--Handel
algorithm \cite{BestvinaHandel95}.
For the upper bound of $L_{\C}(\I_g)$, we use an explicit sequence of pseudo-Anosov 
$f_g \in \I_g$ such that $\el(f_g) \leq D/g$ for some constant $D>0$.
This sequence is arising from Penner's construction \cite{Penner88} 
and this technique can be applied to all other surfaces.

We also investigate the minimal asymptotic translation lengths
of the pure braid group $\PB_n$.
The braid group $\B_n$ can be regarded as the mapping class group of the $n$-punctured disk $D_n$
fixing boundary pointwise. Then the pure braid group $\PB_n$ is analogous to the Torelli group
$\I_g$ in the sense that the action of $\PB_n$ on the first homology $H_1(D_n)$ is trivial.
(For an introduction to braid groups and pure braid groups, we refer the reader 
to \cite[Chapter 9]{FarbMargalit12} or \cite{BirmanBrendle05}.)
Our next theorem in this paper is as follows.
\begin{thm}\label{thm:PBn}
For a pure braid group $\PB_n$, we have
$$L_{\C}(\PB_n) \asymp \frac{1}{n}.$$
\end{thm}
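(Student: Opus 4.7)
The plan is to mirror the proof of Theorem \ref{thm:Torelli}, exploiting the close analogy between $\I_g$ and $\PB_n$: both are kernels of the homological action of a larger mapping class group, and both admit Penner-style pseudo-Anosovs whose constituent Dehn twists automatically lie in the subgroup.

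For the lower bound $L_{\C}(\PB_n)\geq c/n$, I would apply Tsai's inequality \cite[Lemma 3.1]{Tsai09} on the punctured disk $D_n$, for which $\xi(D_n)=n-2$; it supplies a bound of the shape $\ell_{\C}(f)\cdot \log\lambda(f)\geq c/n$ for every pseudo-Anosov $f\in\Mod(D_n)$. Combined, as in the Torelli case, with the Bestvina--Handel algorithm \cite{BestvinaHandel95} and with a uniform lower bound $\log\lambda(f)\geq\kappa>0$ for pseudo-Anosov $f\in\PB_n$ --- the pure-braid counterpart of the Torelli dilatation bound, which one expects via a double-cover argument from the triviality of the action on $H_1(D_n)$ (this triviality forces the invariant foliations to be non-orientable, since an orientable foliation would make $\lambda(f)$ a homological eigenvalue of $f_\ast=\mathrm{id}$) --- this yields $\ell_{\C}(f)\geq C/n$ for every pseudo-Anosov $f\in\PB_n$.

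For the upper bound $L_{\C}(\PB_n)\leq D/n$, I would produce an explicit sequence $f_n\in\PB_n$ from Penner's construction \cite{Penner88}. The critical observation is that every Dehn twist in $D_n$ is automatically in $\PB_n$: an essential simple closed curve in $D_n$ admits an annular neighborhood disjoint from all punctures, so its Dehn twist fixes each puncture pointwise. Consequently Penner's recipe applied to any filling configuration on $D_n$ produces a pseudo-Anosov inside $\PB_n$. I would take two families $\{a_1,\dots,a_k\}$ and $\{b_1,\dots,b_\ell\}$ of small curves enclosing two consecutive punctures each, staggered so that their union fills $D_n$ with $k+\ell=\Theta(n)$ curves, and set $f_n=T_{a_1}\cdots T_{a_k}T_{b_1}^{-1}\cdots T_{b_\ell}^{-1}$.

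The main technical obstacle is to verify $\ell_{\C}(f_n)\leq D/n$ for this family. The plan is to work with the invariant train track $\tau_n$ furnished by the construction --- whose branch count grows linearly in $n$ --- and to exploit the fact that any two of the $a_i$ (respectively $b_j$) lie at bounded curve-graph distance. Tracking how $f_n$ propagates a carried curve $\alpha$ through the chain $a_1,\dots,a_k,b_1,\dots,b_\ell$ should produce an integer $m_n=\Theta(n)$ with $d_{\C}(\alpha,f_n^{m_n}(\alpha))\leq C'$, hence $\ell_{\C}(f_n)\leq C'/m_n\leq D/n$. Making this per-step displacement estimate uniform in $n$ is the central difficulty; modulo this, Theorem \ref{thm:PBn} follows from the same framework as Theorem \ref{thm:Torelli}.
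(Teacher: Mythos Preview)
Your upper-bound strategy matches the paper's: Penner's construction on a chain of curves $a_i$, each enclosing two consecutive punctures, produces a pure braid $f_n$, and tracking iterates through the chain gives $d_\C(a_1,f_n^{n-3}(a_1))\le 2$, hence $\ell_\C(f_n)\le 2/(n-3)$.

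The lower-bound argument, however, has a genuine gap. First, Tsai's Lemma 3.1 is not an inequality of the shape $\ell_\C(f)\cdot\log\lambda(f)\ge c/n$; it is the Lefschetz-number criterion (if $L(f)<0$ then the transition matrix has a positive diagonal entry, so $q=1$ in Proposition~\ref{prop:lowerboundcriterion}). On the punctured disk this criterion is unavailable --- the paper says so explicitly at the start of Section~\ref{section:lowerbound}. Second, your replacement ingredient, a uniform bound $\log\lambda(f)\ge\kappa>0$ for pseudo-Anosov $f\in\PB_n$, is not established by your sketch and is not known. Non-orientability of the invariant foliation (which your $H_1$ argument does show) is far weaker than a uniform dilatation bound: passing to the orientation double cover makes $\lambda(f)$ a homological eigenvalue there, but on a surface whose first Betti number grows with $n$, so no $n$-independent lower bound follows. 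The Farb--Leininger--Margalit uniform bound for $\I_g$ is a nontrivial theorem with no known analogue for $\PB_n$ obtainable by the route you indicate.

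The paper circumvents this entirely. By Euler--Poincar\'e the invariant train track has linearly many punctured monogons or bigons, while the total number of real branches is at most $3|\chi(D_n)|$; a pigeonhole count then locates one monogon or bigon with at most $23$ real branches attached at its cusp(s). Since a pure braid fixes every puncture, this forces $M_{\mathcal R}^{23}$ to have a positive diagonal entry, i.e.\ $q\le 23$ uniformly in $n$, and Proposition~\ref{prop:lowerboundcriterion} then yields $\ell_\C(f)\ge 1/(158n-168)$.
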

\noindent
On the contrary, it is known that $L_{\C}(\B_n) \asymp 1/n^2$ (see \cite[Theorem B]{KinShin17}).
To obtain the lower bound, we cannot use the Lemma 3.1 of \cite{Tsai09}
as in the Torelli group.
Instead, we use the calculations in \cite{GadreTsai11} and the nesting lemma by Masur--Minsky \cite{MasurMinsky99}.
The upper bound of $L_{\C}(\PB_n)$ is again obtained by an explicit sequence of pure braids 
$f_n \in \PB_n$ from Penner's construction.

We also discuss the minimal translation length for the pure mapping class
group $\PMod(S)$. In fact, our method in the proof of Theorem \ref{thm:PBn} can
be easily adapted in this case, and one can obtain another proof of
the following theorem of Valdivia.
\begin{thm}[ c.f., \cite{Valdivia14} ] \label{thm:PMod} 
For any fixed $g \geq 0$, 
$$L_{\C}(\PMod(S_{g,n})) \asymp \frac{1}{n}.$$
\end{thm}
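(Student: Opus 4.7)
The plan is to mirror the proof of Theorem \ref{thm:PBn}, splitting into an upper and a lower bound on $L_{\C}(\PMod(S_{g,n}))$ with $g$ held fixed and $n$ varying.

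For the upper bound, I would exhibit an explicit sequence $f_n \in \PMod(S_{g,n})$ of pseudo-Anosov elements produced by Penner's construction \cite{Penner88} and check directly that $\el(f_n) \leq D/n$. Since every Dehn twist about a simple closed curve already fixes every puncture pointwise, any product of Dehn twists automatically lies in $\PMod(S_{g,n})$; so one has wide latitude in selecting a filling system of curves on $S_{g,n}$. Choosing a system whose size scales linearly with $n$ and then applying the same translation-length computation used in the pure braid case should yield the desired estimate.

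For the lower bound, I would follow the Gadre--Tsai \cite{GadreTsai11} plus Masur--Minsky nesting lemma \cite{MasurMinsky99} strategy already used for Theorem \ref{thm:PBn}. The key observation is that any pseudo-Anosov $f \in \PMod(S_{g,n})$ fixes each of the $n$ punctures, so the invariant singular foliations of $f$ have a singularity at every puncture. This supplies exactly the input needed to run the \cite{GadreTsai11} estimate, which bounds $\el(f)$ from below in terms of (essentially) the number of singularities of the invariant foliation; with $g$ fixed, that number is at least $n$, and one obtains $\el(f) \geq C/n$.

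The principal obstacle I expect is the same one encountered in the pure braid case: Tsai's Lemma 3.1, which cleanly produces the lower bound in the Torelli setting, is too weak here because the complexity $\xi(S_{g,n}) = 3g-3+n$ itself grows with $n$, so one cannot directly cite Tsai. Reusing the more delicate Gadre--Tsai plus nesting-lemma argument instead is therefore essential. Once the $\PB_n$ version is in hand, however, the genus $g$ enters only through bounded additive and multiplicative constants, and the adaptation is largely a matter of bookkeeping rather than a new idea.
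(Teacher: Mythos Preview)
Your plan is correct and essentially matches the paper: the upper bound comes from an explicit Penner-type sequence (the paper cites Valdivia's sequence for $g \geq 2$ and the $\PB_n$ construction for $g=0$), and the lower bound comes from adapting the Gadre--Tsai/nesting-lemma argument of Section~\ref{section:lowerbound} to $S_{g,n}$, exactly as you propose. Two small corrections are worth noting. First, the obstruction to using Lemma~\ref{lem:Markovpartition} is not that the complexity grows with $n$; it is that the Lefschetz number need not be negative when $g \leq 1$. For $g \geq 2$ the Lefschetz argument \emph{does} apply and is in fact Valdivia's original proof, so the new content here is really the low-genus cases. Second, it is not true that every puncture is a singularity of the invariant foliation: a puncture lying in a bigon of the train track is a regular point. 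The paper's argument therefore splits into a monogon/bigon case analysis via the Euler--Poincar\'e formula, but the underlying mechanism---that $f \in \PMod(S_{g,n})$ fixes each puncture, so the real branches attached near a given puncture must cycle among themselves in bounded time, yielding a uniform $q$ in Proposition~\ref{prop:lowerboundcriterion}---is exactly the one you identify.
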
  

The original theorem by Valdivia in \cite{Valdivia14}
was proved for $g \geq 2$ using a technique which is similar to one we used to
prove Theorem \ref{thm:Torelli}. 
Our proof also works for $g=$ 0 or 1 as well as $g \geq 2$,
and hence we can state this theorem for all $g\geq 0$.

\subsection*{Acknowledgements}
We thank Dan Margalit for suggesting this problem and helpful
comments. We thank Chenxi Wu for fruitful discussions. 
The authors also thank the anonymous referees for their valuable comments which improved the paper very much. 
The first author was partially supported by Samsung Science \& Technology Foundation grant No. SSTF-BA1702-01.
The second author was supported by Basic Science Research Program
through the National Research Foundation of Korea(NRF) funded by the Ministry of Education
(NRF-2017R1D1A1B03035017).

%%%%%%%%%%%%%%%%%%%%%%%%%%%%%%%%%%%%%%%%%%%%%%%%%%%%%%%%%%%%%%%%%%%%%%%%%%%%%%%
%
%	Background
%
%%%%%%%%%%%%%%%%%%%%%%%%%%%%%%%%%%%%%%%%%%%%%%%%%%%%%%%%%%%%%%%%%%%%%%%%%%%%%%%
\medskip
\section{Background}
\subsection{Train tracks and Bestvina--Handel algorithm}	\label{section:traintrack}
In the late 1970's, Thurston introduced a powerful tool to study measured geodesic
laminations on a surface, so called train tracks. 
For more discussion about train tracks, see \cite{PennerHarer92} or \cite{BestvinaHandel95}.

A \textit{train track} $\tau$ is a smooth 1-complex embedded in a surface $S$ where 
there is a well-defined tangent line at each vertex, and that there are edges tangent in each direction.
Vertices and edges of $\tau$ are also called switches and branches, respectively.
We require that each vertex of $\tau$ is at least tri-valent.
We assign each edge of $\tau$ a nonnegative number, called a \textit{weight},
so that it satisfies switch conditions. That is, the sum of weights on incoming edges
is equal to the sum of weights on outgoing edges.
The set $\mu$ of weights on $\tau$ is called a \textit{measure}.

Every pseudo-Anosov $f \in \Mod(S)$ has a weighted train track $\tau$ such that
$f(\tau)$ collapses to $\tau$. We call such $\tau$ an invariant train track for $f$.
In \cite{BestvinaHandel95}, Bestvina and Handel  gave an algorithm to find an invariant
train track $\tau$ for a given pseudo-Anosov $f$.
The branches of $\tau$ consist of two types, \textit{real} branches and \textit{infinitesimal}
branches.
There are at most $9|\chi(S)|$ real branches when $S$ is closed, and
at most $3|\chi(S)|$ real branches when $S$ is punctured. Furthermore,
there are at most $24|\chi(S)|-8n$ infinitesimal branches (see Section 4 of \cite{GadreTsai11}).

Let $\mathcal{R}$ be the set of real branches of $\tau$.
In \cite{BestvinaHandel95}, Bestvina and Handel showed that the transition matrix $M$ of $f$ on $\tau$ is of the form
$$M=
\begin{pmatrix}
A & B \\
O & M_{\mathcal{R}}
\end{pmatrix},
$$
where $A$ is a permutation matrix on infinitesimal branches,
and $M_{\mathcal{R}}$ is the transition matrix on real branches where
$M_{\mathcal{R}}$ is \textit{primitive}, that is,
for some $m \in \N$, $M_{\mathcal{R}}^m$ is a positive matrix .

\medskip
\subsection{Nesting lemma and lower bound of asymptotic translation length}
Let $\tau$ be a train track,
and let $P(\tau)$ be the polyhedron of measures supported on $\tau$, that is, 
a cone satisfying switch conditions in $\R^B_{\geq 0}$, where $B$ is the number of
branches of $\tau$.
We say that a measure $\mu$ on $\tau$ is positive if it has positive weights on every branch.
Let $int(P(\tau))$ be the set of positive measures supported on $\tau$.
We say a train track $\tau$ is \textit{recurrent} if there is a positive measure $\mu \in int(P(\tau))$.
A train track is \textit{transversely recurrent} if given a branch of $\tau$,
there is a simple closed curve on $S$ that crosses the branch, intersects $\tau$
transversely, and the union of $\tau$ and the simple closed curve has no complementary
bigons. A train track is said to be \textit{birecurrent} if it is both recurrent and transversely recurrent.

We say that a train track $\tau$ fills the surface $S$ if the complementary region $S \setminus \tau$
is a union of ideal polygons containing at most one puncture.
If $\tau$ fills $S$, then a train track $\sigma$ is called a 
\textit{diagonal extension} of $\tau$ if $\tau$ is sub-track of $\sigma$ and
each branch of $\sigma \setminus \tau$ has its endpoints terminating in the cusps 
of complementary regions of $\tau$. 
Let 
$$PE(\tau) = \bigcup_{\sigma \in E(\tau)} P(\sigma),$$
and let $int(PE(\tau))$ be the set of measures in $PE(\tau)$ that are positive on each
branch of some diagonal extension of $\tau$. 
Abusing the notation, $int(PE(\tau))$ and $PE(\tau)$ also denote the set of curves which defines measures in $int(PE(\tau))$ and $PE(\tau)$, respectively. 
Masur and Minsky showed the following lemma in \cite{MasurMinsky99}. 
\begin{lem}[Nesting Lemma]
Let $\tau$ be a birecurrent train track that fills the surface $S$. Then
$$\mathcal{N}_1(int(PE(\tau))) \subset PE(\tau),$$
where $\mathcal{N}_1(X)$ is the 1-neighborhood of $X$ in the curve complex.
\end{lem}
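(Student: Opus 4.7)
The plan is, given $\beta$ at distance at most $1$ in $\C(S)$ from some $\alpha \in int(PE(\tau))$, to construct a diagonal extension $\sigma'$ of $\tau$ that carries $\beta$. Fix a diagonal extension $\sigma \in E(\tau)$ on which $\alpha$ has strictly positive weights, realize $\alpha$ inside a fibered (tie) neighborhood $N(\sigma)$ so that it is transverse to every tie and crosses every branch rectangle, and use distance at most $1$ to replace $\beta$ by a representative disjoint from $\alpha$.

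First I would isotope $\beta$ into efficient position with respect to $\tau$, meaning $\beta$ is transverse to $\tau$ and $S\setminus(\tau\cup\beta)$ contains no bigon; this is precisely the geometric content of transverse recurrence of $\tau$. Because $\tau$ fills $S$, its complementary regions are ideal polygons with at most one puncture each, and the branches of $\sigma\setminus\tau$ sit inside these polygons as cusp-to-cusp diagonals. Positivity of the weights of $\alpha$ on $\sigma$ forces $\alpha$ to cross every tie of $N(\sigma)$, and disjointness $\beta\cap\alpha=\emptyset$ then pins every component of $\beta\cap N(\sigma)$ to a single tie-parallel strip; after a small isotopy such components run monotonically along the branches of $\sigma$ and are thus carried by $\sigma$.

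Next I would turn to the arcs of $\beta$ that lie in the complementary polygons of $\tau$. Efficient position eliminates trivial arcs and arcs that cut off a bigon with a polygon side, so every remaining arc runs from cusp to cusp. Adjoining these cusp-to-cusp arcs to $\tau$, smoothed at the cusps so that the switch tangency condition is satisfied, produces a train track $\sigma'$ containing $\tau$ as a sub-track and whose new branches are diagonals. Hence $\sigma'\in E(\tau)$, and by construction $\beta$ is carried by $\sigma'$, giving $\beta\in PE(\tau)$.

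The main obstacle I anticipate is the combinatorial bookkeeping at the cusps and ties: verifying that every polygon-arc of $\beta$ actually has both endpoints at cusps (rather than in the interior of a polygon edge), that $\beta\cap N(\sigma)$ genuinely admits the tie-parallel isotopy claimed above, and that after all the smoothings the result is a legitimate train track. This is exactly the step where transverse recurrence of $\tau$ (to secure efficient position with no bigons) and positivity of $\alpha$ on every branch of $\sigma$ (to pin $\beta$'s behaviour inside $N(\sigma)$) work in tandem; each hypothesis is doing indispensable work, which fits the statement's requirement that $\tau$ be birecurrent and $\alpha$ lie in the interior.
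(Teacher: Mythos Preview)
The paper does not supply its own proof of this lemma; it is quoted from Masur--Minsky \cite{MasurMinsky99} and used as a black box, so there is no in-paper argument to compare against.

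That said, your sketch is essentially the Masur--Minsky argument: realize $\alpha$ as fully carried in a tie neighborhood, use $\beta\cap\alpha=\emptyset$ to force the portion of $\beta$ inside the tie neighborhood to run parallel to the ties (hence be carried), and use the no-bigon condition to force the remaining arcs in the complementary polygons to be cusp-to-cusp, yielding the diagonals of the new extension $\sigma'$. Two small points are worth tightening. First, you oscillate between the tie neighborhood $N(\sigma)$ of the diagonal extension and the complementary polygons of $\tau$; these overlap, since the diagonals of $\sigma$ sit inside the $\tau$-polygons. It is cleaner to work throughout with $N(\tau)$ and the complementary regions of $\tau$: positivity of $\alpha$ on $\sigma$ implies positivity on the sub-track $\tau$, so $\alpha$ already crosses every tie of $N(\tau)$, and all arcs of $\beta$ outside $N(\tau)$ then live in the $\tau$-polygons. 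Second, the statement that efficient (no-bigon) position of $\beta$ relative to $\tau$ is ``precisely the geometric content of transverse recurrence'' is not right: any simple closed curve can be put in efficient position relative to any train track; transverse recurrence enters the Masur--Minsky machinery elsewhere (for instance, to guarantee that the relevant train tracks and their diagonal extensions remain birecurrent so that the nesting can be iterated).
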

From this lemma, Gadre and Tsai established the way to obtain lower bound of asymptotic
translation length on the curve graph of a given pseudo-Anosov mapping class.
Following the proof of Lemma 4.3 and Theorem 5.1 in \cite{GadreTsai11}, 
we have the following result (see also \cite[Proposition 3.6]{GadreHironakaKentLeininger13}).
%\begin{prop} \label{prop:lowerbound}
%If $\tau$ is a maximal birecurrent invariant train track for a pseudo-Anosov
%$f:S \ra S$ and $w \in \N$ such that $f^w(P_{\tau}) \subset int(P_{\tau})$, then
%$$\el(f) \geq \frac{1}{w}.$$
%\end{prop}

%To use this theory properly, one needs to understand the Bestvina--Handel algorithm given in \cite{BestvinaHandel95} to construct an invariant train track for pseudo-Anosov homeomorphisms and use its properties. All we need is summarized in Section 4 of \cite{GadreTsai11}. We refer the readers to \cite{GadreTsai11} for definitions regarding Bestvina--Handel algorithm. 
%Maybe we recall the definition of real and infinitestimal branches? 

%Combining the nesting lemma and Bestvina--Handel algorithm, 
%\cite{GadreTsai11} showed the following.
\begin{prop} \label{prop:lowerboundcriterion} 
Let $f \in \Mod(S_{g,n})$ be a pseudo-Anosov element and let $\tau$ be its invariant train track obtained by Bestvina--Handel algorithm. 
Let $r$ be the number of real branches and 
let $q$ be the integer such that $M_{\mathcal{R}}^q$ has a positive diagonal entry,
where $M_{\mathcal{R}}$ is the transition matrix of real branches.
If we set $k= 2qr + 24|\chi(S_{g,n})|-8n$, then for any real branch $\beta$ of $\tau$,
the path $f^k(\beta)$ traverses every branch of $\tau$.
Furthermore, combining with the nesting lemma, if we set $w := k + 6|\chi(S_{g,n})|-2n$,
then we have
$$\el(f) \geq \frac{1}{w}.$$
\end{prop}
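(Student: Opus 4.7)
The plan is to follow the argument laid out in \cite[Lemma 4.3 and Theorem 5.1]{GadreTsai11}, together with the refinement in \cite[Proposition 3.6]{GadreHironakaKentLeininger13}. The statement breaks naturally into two parts: the purely combinatorial assertion that $f^k(\beta)$ traverses every branch of $\tau$ for $k=2qr+24|\chi(S_{g,n})|-8n$, and the deduction of the lower bound $\el(f)\ge 1/w$ via the nesting lemma. I would address them in that order.

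For the combinatorial part I would exploit the block structure
$$M=\begin{pmatrix}A & B \\ O & M_{\mathcal R}\end{pmatrix},$$
so that the real-branch coordinates of any iterate of a real branch are governed purely by powers of $M_{\mathcal R}$. The key sub-claim is that $M_{\mathcal R}^{2qr}$ is entrywise positive. This is a Perron--Frobenius style argument: primitivity of $M_{\mathcal R}$ implies that in the associated directed graph on the $r$ real branches, any two vertices are joined by a directed path of length at most $r-1$, and the positive diagonal entry of $M_{\mathcal R}^q$ provides a length-$q$ self-loop at some vertex which can be inserted an appropriate number of times to pad any such path to length exactly $2qr$. This already gives that $f^{2qr}(\beta)$ traverses every real branch. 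To get every branch, one uses that in a Bestvina--Handel invariant train track each infinitesimal branch is traversed by $f$ applied to some real branch (so the matrix $B$ has no zero row); hence once the image path contains every real branch, each additional iteration of $f$ introduces at least one new infinitesimal branch, and since at most $24|\chi(S_{g,n})|-8n$ infinitesimal branches exist, that many additional iterations suffice.

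For the second part I would apply the nesting lemma in the style of Gadre--Tsai. Pick a simple closed curve $\alpha$ carried by $\tau$ with strictly positive weights on every branch, so $\alpha\in int(PE(\tau))$. The combinatorial conclusion just established means that $f^k$ sends every branch of $\tau$ across every branch of $\tau$, and in particular the iterate $f^k(\alpha)$ again lies in $int(PE(\tau))$ with strictly positive weights. The nesting lemma states that $\mathcal{N}_1(int(PE(\tau)))\subset PE(\tau)$, and iterating this inclusion (together with the analogous statement for diagonal extensions) lets one propagate the ``carried by'' property across many steps of an alleged geodesic in $\C(S_{g,n})$ from $\alpha$ to $f^{jk}(\alpha)$. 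Counting that any diagonal extension of $\tau$ has at most $6|\chi(S_{g,n})|-2n$ branches beyond those of $\tau$ itself yields the extra term in $w=k+6|\chi(S_{g,n})|-2n$ in the denominator, and from this one concludes $\el(f)\ge 1/w$.

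The main obstacle I anticipate is extracting the clean exponent $2qr$ for positivity of $M_{\mathcal R}^{2qr}$: primitivity plus a positive diagonal entry at power $q$ certainly yields some finite exponent of positivity, but pinning this down to $2qr$ requires a careful number-theoretic padding argument, namely showing that for every ordered pair of real branches the set of lengths of directed paths between them in the transition graph contains the specific value $2qr$. The remaining technical work (counting branches in diagonal extensions and iterating the nesting lemma to convert a combinatorial covering statement into a curve-graph distance bound) is essentially bookkeeping once the Gadre--Tsai framework is in place.
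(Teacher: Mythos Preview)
The paper does not supply its own proof of this proposition; it simply records it as following from \cite[Lemma~4.3 and Theorem~5.1]{GadreTsai11} together with \cite[Proposition~3.6]{GadreHironakaKentLeininger13}, which are exactly the sources you propose to follow, so your approach coincides with the paper's. Regarding the obstacle you flag about the exponent $2qr$: the clean fix is to run the padding argument in the directed graph of $M_{\mathcal R}^{\,q}$ rather than of $M_{\mathcal R}$, where the positive diagonal entry becomes a length-$1$ self-loop at some vertex $v$ and irreducibility (inherited from primitivity of $M_{\mathcal R}$) gives paths of length at most $r-1$ from any $i$ to $v$ and from $v$ to any $j$; inserting the self-loop the right number of times produces a walk of length exactly $2r$ in that graph, i.e., $(M_{\mathcal R}^{\,q})^{2r}=M_{\mathcal R}^{\,2qr}>0$.
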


%%%%%%%%%%%%%%%%%%%%%%%%%%%%%%%%%%%%%%%%%%%%%%%%%%%%%%%%%%%%%%%%%%%%%%%%%%%%%%%
%
%	Lower bound of Torelli groups
%
%%%%%%%%%%%%%%%%%%%%%%%%%%%%%%%%%%%%%%%%%%%%%%%%%%%%%%%%%%%%%%%%%%%%%%%%%%%%%%%
\medskip
\section{Lower bound for Torelli groups}	\label{section:Torelli_lower}

Let $X$ be a compact oriented manifold and 
let $\phi:X \ra X$ be a continuous map.
We define the graph of $\phi$ by
$\textrm{graph}(\phi) = \{ (x,\phi(x)) \in X \times X | \ x \in X \}.$
Let $\Delta$ be the diagonal of $X \times X$, that is,
$\Delta = \{ (x,x) | \ x \in X\} \subset X \times X.$
The \textit{Lefschetz number} of $\phi$, denoted by $L(\phi)$,
is defined by the algebraic intersection number 
$\hat{i}(\Delta, \textrm{graph}(\phi))$.
Since the Lefschetz number is a homotopy invariant, it is well-defined for
the homotopy class $f$ of $\phi$.
It can be computed by
$$L(f) = \sum_{i \geq 0} (-1)^i Tr(f_{\ast}^{(i)}),$$
where $f_{\ast}^{(i)}$ is the action on $H_i(X;\R)$ induced by $f$.
For complete discussion, see \cite{GuilleminPollack74} or 
\cite{BottTu82}.

In \cite{Tsai09}, Tsai gives the following crucial lemma.

\begin{lem}	\label{lem:Markovpartition}
For any pseudo-Anosov element $f \in \Mod(S_{g,n})$ equipped with
a Markov partition, if $L(f) < 0$,
then there is a rectangle $R$ of the Markov partition such that
the interior of $f(R)$ and $R$ intersect, i.e.,
the transition matrix of the Markov partition has a positive
diagonal entry.
\end{lem}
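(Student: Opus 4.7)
The plan is to prove the contrapositive by the Lefschetz--Hopf fixed point theorem: assume every diagonal entry of the transition matrix is zero, i.e. $\mathrm{int}(f(R_i)) \cap R_i = \emptyset$ for every rectangle $R_i$, and show that $L(f) \geq 0$.

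First, I would use the Lefschetz--Hopf formula
\[
L(f) = \sum_{x:f(x)=x} \mathrm{ind}_x(f)
\]
and classify the fixed points of $f$ by their position relative to the Markov partition. Any fixed point in the interior of some rectangle $R_i$ arises from a component of $f(R_i) \cap R_i$ stretching fully across $R_i$ in the unstable direction while being compressed in the stable direction; the number of such fixed points inside $R_i$ equals the diagonal entry $M_{ii}$, and because the local model is the linear hyperbolic map with eigenvalues $\lambda > 1$ and $\lambda^{-1} < 1$, each has local index $(1-\lambda)(1-\lambda^{-1})/|(1-\lambda)(1-\lambda^{-1})| = -1$. So under the standing assumption, \emph{there are no interior fixed points at all}, and every fixed point of $f$ must lie on the stable/unstable boundary of the partition or at a prong singularity.

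Next, I would analyze the remaining fixed points, which are concentrated at singularities of the invariant foliation. For a $p$-pronged fixed singularity $s$, the pseudo-Anosov $f$ permutes the $p$ unstable prongs by some permutation $\sigma$. The local Lefschetz index is $1-p$ when $\sigma = \mathrm{id}$ (all prongs fixed) and is non-negative (in fact $+1$ for any cyclic rotation with no fixed prong) otherwise. The key point is to observe that the standing assumption \emph{forbids} $\sigma = \mathrm{id}$: if $s$ is a singular fixed point at the cusp of some rectangle $R_i$ and every prong at $s$ is fixed by $f$, then the image $f(R_i)$ is forced to re-enter $R_i$ through the cusp, producing the forbidden horizontal sub-rectangle in $R_i$ and hence $M_{ii} > 0$. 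Thus every singular fixed point contributes a non-negative local index.

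Combining the two contributions, when all diagonal entries vanish we get
\[
L(f) = 0 + \sum_{s \in \mathrm{Fix}(f) \cap \mathrm{Sing}} \mathrm{ind}_s(f) \geq 0,
\]
contradicting $L(f) < 0$ and establishing the lemma. The main obstacle is the last claim of the previous paragraph: rigorously ruling out singular fixed points with all prongs fixed under the hypothesis that no rectangle self-overlaps. This requires a careful local picture of how the Markov rectangles fit around each prong singularity, together with the fact that the unstable prong direction is where $f$ expands and must therefore push the cusp of a rectangle into the interior of a neighboring (or the same) rectangle if the prong is preserved. Everything else is a direct application of the Lefschetz--Hopf formula and the known local indices for pseudo-Anosov singularities.
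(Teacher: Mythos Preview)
The paper does not give its own proof of this lemma; it is quoted verbatim from Tsai \cite[Lemma~3.1]{Tsai09}. So there is nothing in the present paper to compare against, and your sketch should be measured against Tsai's original argument, which proceeds along the same Lefschetz--Hopf lines you outline: compute $L(f)$ as a sum of local indices, note that each interior (regular) fixed point contributes $-1$ and is counted by a diagonal entry of the transition matrix, and that a fixed $p$-pronged singularity contributes $1-p$ when every separatrix is fixed and $+1$ when the separatrices are honestly rotated. Under the hypothesis that all diagonal entries vanish, only contributions $+1$ survive, forcing $L(f)\geq 0$.

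Your identification of the ``main obstacle'' is accurate, and your intuition for it is correct. If $s$ is a fixed singularity with every separatrix fixed, take a rectangle $R$ with a corner at $s$; the unstable edge of $R$ issuing from $s$ lies on a fixed unstable separatrix and is stretched by $\lambda$ about $s$, so $f$ of that edge contains the edge itself, while the stable edge at $s$ is contracted into itself. Hence near $s$ the image $f(R)$ occupies the same sector as $R$, and the Markov property forces this overlap to be a full horizontal sub-rectangle of $R$, giving a positive diagonal entry---a contradiction. One small gap in your write-up: you pass directly from ``no interior fixed points'' to ``all remaining fixed points are singular,'' but a regular fixed point could in principle sit on the stable/unstable boundary of the partition. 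The same local expansion/contraction argument (or simply observing that such a point still has index $-1$ and forces an adjacent rectangle to self-overlap) handles this case; you should say a word about it.
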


%\begin{lem}
%\label{lem:positivematrix}
%Let $M$ be a $n \times n$ integral primitive matrix.
%If there is a nonzero entry on the diagonal of $M$,
%then $M^{2n}$ is a positive matrix.
%\end{lem}

Now we are ready to obtain the lower bound for the
asymptotic translation length of a pseudo-Anosov element $f \in \I_g$.

\begin{thm}	\label{thm:lowerTorelli}
Let $g \geq 2$ and let $f$ be a pseudo-Anosov mapping class in the Torelli group $\I_g$. Then we have
$$\el(f) \geq \frac{1}{96g-96}.$$
\end{thm}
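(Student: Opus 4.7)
The plan is to apply Proposition \ref{prop:lowerboundcriterion} with $q = 1$, and the mechanism to force $q = 1$ is Tsai's Lefschetz-number lemma applied to Torelli elements. First I would run the Bestvina--Handel algorithm on $f$ to produce an invariant birecurrent train track $\tau$ together with the block transition matrix of Section \ref{section:traintrack}; let $M_{\mathcal{R}}$ denote the real-branch transition matrix and $r$ the number of real branches, so that $r \le 9|\chi(S_g)| = 18g - 18$ since $S_g$ is closed.

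Next I would compute the Lefschetz number of $f$. Because $f \in \I_g$ acts trivially on $H_1(S_g;\R)$, we have $Tr(f_\ast^{(1)}) = 2g$, while the contributions from $H_0$ and $H_2$ are each $+1$. Hence
\[ L(f) = 1 - 2g + 1 = 2 - 2g, \]
which is strictly negative for $g \ge 2$. Lemma \ref{lem:Markovpartition} then supplies a rectangle $R$ of the associated Markov partition whose interior meets $f(R)$; via the standard correspondence coming out of Bestvina--Handel between rectangles of the Markov partition and real branches of $\tau$, this translates to a positive diagonal entry of $M_{\mathcal{R}}$. We may therefore take $q = 1$ in Proposition \ref{prop:lowerboundcriterion}.

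The remainder is direct substitution. With $q = 1$, $r \le 18g-18$, $|\chi(S_g)| = 2g-2$, and $n = 0$,
\[ k = 2qr + 24|\chi(S_g)| \le 2(18g-18) + 24(2g-2) = 84g - 84, \]
\[ w = k + 6|\chi(S_g)| \le (84g - 84) + (12g - 12) = 96g - 96, \]
so Proposition \ref{prop:lowerboundcriterion} yields $\el(f) \ge 1/w \ge 1/(96g-96)$.

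The only nontrivial step, and the main potential obstacle, is making rigorous the identification between the ``positive diagonal entry of the Markov rectangle transition matrix'' supplied by Lemma \ref{lem:Markovpartition} and the ``positive diagonal entry of $M_{\mathcal{R}}$'' demanded by Proposition \ref{prop:lowerboundcriterion}. This rests on the feature of Bestvina--Handel that the Markov partition constructed from an invariant train track $\tau$ has one rectangle per real branch, with induced transition matrix equal to $M_{\mathcal{R}}$; invoking this correspondence is standard but worth stating explicitly in the write-up.
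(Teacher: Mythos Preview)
Your proposal is correct and mirrors the paper's proof essentially line for line: compute $L(f)=2-2g<0$, invoke Lemma~\ref{lem:Markovpartition} to force $q=1$, bound $r\le 9|\chi(S_g)|$, and feed this into Proposition~\ref{prop:lowerboundcriterion} to obtain $w\le 48|\chi(S_g)|=96g-96$. The identification you flag between Markov-partition rectangles and real branches is exactly what the paper asserts without further comment, so your write-up is, if anything, slightly more careful on that point.
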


\begin{proof}
For a closed surface $S_g$ of genes $g \geq 2$, 
$H_i(S_g; \R) = 0$ for all $i \geq 3$.
Note that $H_0(S_g; \R) \simeq H_2(S_g; \R) \simeq \R$ and 
$f_{\ast}^{(i)}$ is the identity for $i = 0$ or 2.
Since $f$ is in the Torelli group, $f_{\ast}^{(1)}$ is also
the identity on $H_1(S_g; \R) \simeq \R^{2g}$.
Therefore we have the Lefschetz number $L(f) = 1 - 2g +1 = 2 -2g <0$ 
for all $g \geq 2$.
By Lemma \ref{lem:Markovpartition}, there is a positive
diagonal entry in the Markov partition matrix $M_{\mathcal{R}}$ of $f$,
that is, the transition matrix of real branches of the train track.
Hence in Proposition \ref{prop:lowerboundcriterion}, we have $q = 1$.
Since the number of real branches satisfies $r \leq 9|\chi(S_g)|$,
we have $w \leq 48 |\chi(S_g)|$ and 
$$\el(f) \geq \frac{1}{96g-96}.$$
\end{proof}

%%%%%%%%%%%%%%%%%%%%%%%%%%%%%%%%%%%%%%%%%%%%%%%%%%%%%%%%%%%%%%%%%%%%%%%%%%%%%%%
%
%	Upper bound of Torelli groups
%
%%%%%%%%%%%%%%%%%%%%%%%%%%%%%%%%%%%%%%%%%%%%%%%%%%%%%%%%%%%%%%%%%%%%%%%%%%%%%%%
\medskip
\section{Upper bound for Torelli groups}	\label{section:Torelli_upper}

In this section, we give an upper bound for $L_C(\I_g)$ 
using an explicit family of pseudo-Anosov elements $f_g$ in $\I_g$.
For a simple closed curve $a$, let $T_a$ be the left-handed Dehn twist about $a$.
We apply elements of the mapping class group from right to left.

%%%		  Updated from Here!!!!!			%%%%

\begin{thm}	\label{thm:upperTorelli}
For all $g \geq 13$,
$$L_C(\I_g) \leq \frac{8}{g-12}.$$
\end{thm}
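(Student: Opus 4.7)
The plan is to construct, for each $g\geq 13$, an explicit pseudo-Anosov $f_g\in\I_g$ via Penner's construction and then bound $\el(f_g)$ from above directly. Two ingredients enter: Penner's criterion, which produces pseudo-Anosovs from a pair of multicurves $(A,B)$ that jointly fill $S_g$ by taking any product of positive Dehn twists along $A$ and negative Dehn twists along $B$ (each used at least once); and the elementary fact that a bounding pair map $T_a T_b^{-1}$, where $a,b$ are disjoint, non-isotopic, homologous simple closed curves, lies in $\I_g$.

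Concretely, I would first choose $m$ pairwise disjoint bounding pairs $(a_i,b_i)$ of nonseparating simple closed curves on $S_g$, with $m$ proportional to $g-12$, arranged so that $A = \bigsqcup_i a_i$ and $B = \bigsqcup_i b_i$ are each multicurves and $A\cup B$ fills $S_g$. Setting
\[ f_g := \prod_{i=1}^{m}\bigl(T_{a_i}\, T_{b_i}^{-1}\bigr), \]
each factor is a bounding pair map, so $f_g\in\I_g$; and since Penner's sign pattern holds (positive twists along $A$, negative along $B$) and $A\cup B$ fills $S_g$, Penner's theorem makes $f_g$ pseudo-Anosov.

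To control $\el(f_g)$ I would analyze Penner's explicit invariant train track $\tau_g$ for $f_g$, which carries both $A$ and $B$ and has transition matrix in a standard block form of size linear in $m$. One clean route is to bound the dilatation $\lambda(f_g)\leq C^{1/m}$ for an explicit universal constant $C$ via the Perron--Frobenius eigenvalue of this block matrix, and then combine with Hempel's inequality $d_{\C}(\alpha,\beta)\leq 2\log_2 i(\alpha,\beta)+2$ and the exponential growth $i(\alpha,f_g^K(\alpha))\asymp\lambda(f_g)^K$ to conclude $\el(f_g)\leq 2\log_2\lambda(f_g)\leq (2\log_2 C)/m$. Taking $m=g-12$ and tuning the constants yields the bound $\el(f_g)\leq 8/(g-12)$. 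Alternatively, one can track a distinguished simple closed curve carried by $\tau_g$ under iteration and invoke the nesting lemma of Section~2.2 applied to a fixed diagonal extension of $\tau_g$ to reach the same bound directly, without passing through the dilatation.

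The main obstacle is the explicit topological design of the $m$ bounding pairs: one must arrange them so that all $a_i$'s are mutually disjoint, all $b_i$'s are mutually disjoint, and $A\cup B$ fills $S_g$, while keeping $m$ proportional to $g-12$. A natural candidate is a chain-of-handles arrangement across $S_g$ in which the $i$-th bounding pair encloses the $i$-th handle and the extra $12$ comes from covering a ``capping'' region where the chain closes up. Extracting the precise constant $8$ then amounts to a careful count in the eigenvalue estimate (or, in the alternative route, in the distance bound supplied by the nesting lemma).
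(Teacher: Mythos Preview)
Your proposed route through the dilatation cannot succeed. By a theorem of Farb--Leininger--Margalit (the same reference \cite{FarbLeiningerMargalit08} cited in the introduction), the dilatation of any pseudo-Anosov element of $\I_g$ is bounded below by a universal constant strictly greater than $1$, independent of $g$. Hence no sequence $f_g\in\I_g$ can satisfy $\lambda(f_g)\leq C^{1/m}$ with $m\to\infty$; the inequality $\el(f_g)\leq 2\log_2\lambda(f_g)$, while correct, therefore yields only a constant upper bound, not one decaying like $1/g$. This is precisely the phenomenon that distinguishes the Torelli case from $\Mod(S_g)$: small translation length on $\C(S_g)$ is \emph{not} driven by small dilatation here. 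Your alternative via the nesting lemma is also misdirected as stated: in this paper the nesting lemma is the tool for \emph{lower} bounds on $\el$, not upper bounds.

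The paper's argument bypasses dilatation entirely. It takes a Penner-type map $f_g=T_B^{-1}T_A$ where every curve in $A$ and in $B$ is \emph{separating} (so each individual twist already lies in $\I_g$; no bounding pairs are needed), arranged in a linear chain of length $\asymp g/2$ across $S_g$. The upper bound then comes from a direct ``slow propagation'' estimate: one checks that $f_g^{\,j}(a_i)\subset\NN(b_{i-j}\,a_{i-j+1}\cdots a_{i+j-1}\,b_{i+j-1})$, i.e., after $j$ iterations the image has spread only $j$ steps along the chain in each direction. Choosing $i=\lceil g/4\rceil$ and $j=\lceil g/4\rceil-3$, this neighborhood still misses a fixed simple closed curve $\gamma$ at one end of $S_g$, so $d_{\C}(a_i,f_g^{\,j}(a_i))\leq 2$ and $\el(f_g)\leq 2/j\leq 8/(g-12)$. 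The key point is purely combinatorial---one application of $f_g$ moves the support one step along the chain---and is entirely compatible with $\lambda(f_g)$ being bounded away from $1$.
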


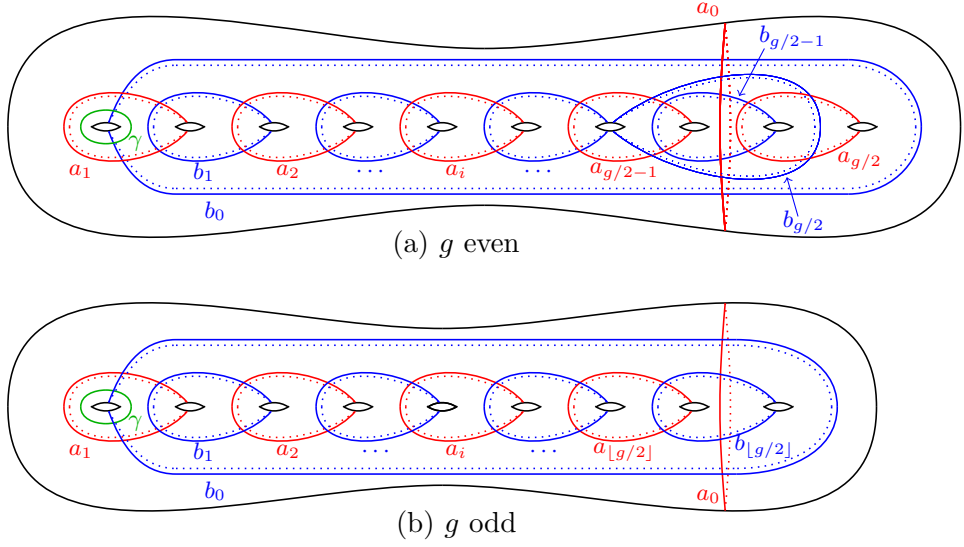
\begin{figure}[t]
\centering
\begin{tikzpicture}[scale=.745]		
		\foreach \y in {2} {
        % surfaces
        \draw[semithick] (2,.9+\y) to [out=90,in=180] (10.5,2.3+\y);
        \draw[semithick] (19,.9+\y) to [out=90,in=0] (10.5,2.3+\y);
        \draw[semithick] (2,.9+\y) to [out=-90,in=180] (10.5,-.5+\y);
        \draw[semithick] (19,.9+\y) to [out=-90,in=0] (10.5,-.5+\y);
        \draw[semithick] (2,.9-3) to [out=90,in=180] (9.75,2.3-3);
        \draw[semithick] (17.5,.9-3) to [out=90,in=0] (9.75,2.3-3);
        \draw[semithick] (2,.9-3) to [out=-90,in=180] (9.75,-.5-3);
        \draw[semithick] (17.5,.9-3) to [out=-90,in=0] (9.75,-.5-3);
		% genus
        \draw[semithick] (3.48,0.91+\y) .. controls (3.65,0.8+\y) and (3.85,0.8+\y) .. (4.02,0.91+\y);
		\draw[semithick] (3.5,0.9+\y) .. controls (3.65,1+\y) and (3.85,1+\y) .. (4,0.9+\y);		
		\draw[semithick] (5,0.9+\y) .. controls (5.15,1+\y) and (5.35,1+\y) .. (5.5,0.9+\y);
		\draw[semithick] (4.98,0.91+\y) .. controls (5.15,0.8+\y) and (5.35,0.8+\y) .. (5.52,0.91+\y);
		\draw[semithick] (6.5,0.9+\y) .. controls (6.65,1+\y) and (6.85,1+\y) .. (7,0.9+\y);
		\draw[semithick] (6.48,0.91+\y) .. controls (6.65,0.8+\y) and (6.85,0.8+\y) .. (7.02,0.91+\y);
        \draw[semithick] (8,0.9+\y) .. controls (8.15,1+\y) and (8.35,1+\y) .. (8.5,0.9+\y);
		\draw[semithick] (7.98,0.91+\y) .. controls (8.15,0.8+\y) and (8.35,0.8+\y) .. (8.52,0.91+\y);
		\foreach \x in {1.5,3,4.5,6,7.5,9}{
		\draw[semithick] (8+\x,0.9+\y) .. controls (8.15+\x,1+\y) and (8.35+\x,1+\y) .. (8.5+\x,0.9+\y);
		\draw[semithick] (7.98+\x,0.91+\y) .. controls (8.15+\x,0.8+\y) and (8.35+\x,0.8+\y) .. (8.52+\x,0.91+\y);
		}
        \foreach \x in {3}{
		\draw[semithick] (3.48,0.91-\x) .. controls (3.65,0.8-\x) and (3.85,0.8-\x) .. (4.02,0.91-\x);
		\draw[semithick] (3.5,0.9-\x) .. controls (3.65,1-\x) and (3.85,1-\x) .. (4,0.9-\x);		
		\draw[semithick] (5,0.9-\x) .. controls (5.15,1-\x) and (5.35,1-\x) .. (5.5,0.9-\x);
		\draw[semithick] (4.98,0.91-\x) .. controls (5.15,0.8-\x) and (5.35,0.8-\x) .. (5.52,0.91-\x);
		\draw[semithick] (6.5,0.9-\x) .. controls (6.65,1-\x) and (6.85,1-\x) .. (7,0.9-\x);
		\draw[semithick] (6.48,0.91-\x) .. controls (6.65,0.8-\x) and (6.85,0.8-\x) .. (7.02,0.91-\x);
        \draw[semithick] (8,0.9-\x) .. controls (8.15,1-\x) and (8.35,1-\x) .. (8.5,0.9-\x);
		\draw[semithick] (7.98,0.91-\x) .. controls (8.15,0.8-\x) and (8.35,0.8-\x) .. (8.52,0.91-\x);
        }
        \foreach \x in {1.5,3,4.5,6,7.5}{
        \draw[semithick] (8+\x,0.9-3) .. controls (8.15+\x,1-3) and (8.35+\x,1-3) .. (8.5+\x,0.9-3);
		\draw[semithick] (7.98+\x,0.91-3) .. controls (8.15+\x,0.8-3) and (8.35+\x,0.8-3) .. (8.52+\x,0.91-3);
        }
        \foreach \x in {3}{
        \draw[semithick] (6.5+\x,0.9-\x) .. controls (6.65+\x,1-\x) and (6.85+\x,1-\x) .. (7+\x,0.9-\x);
		\draw[semithick] (6.48+\x,0.91-\x) .. controls (6.65+\x,0.8-\x) and (6.85+\x,0.8-\x) .. (7.02+\x,0.91-\x);
        }
        %
        %
        % curves for g=even
        %1
        \draw[red] (14.5,3+\y) node {\footnotesize $a_{0}$};
        \draw[red] (3.3,0.1+\y) node {\footnotesize $a_1$};
        \draw[red] (7,0.1+\y) node {\footnotesize $a_2$};
        \draw[red] (10,0.1+\y) node {\footnotesize $a_i$};
        \draw[red] (13,.1+\y) node {\footnotesize $a_{g/2-1}$};
        \draw[red] (17.2,.3+\y) node {\footnotesize $a_{g/2}$};
        \draw[blue] (5.7,-.6+\y) node {\footnotesize $b_{0}$};
        \draw[blue] (5.5,0.1+\y) node {\footnotesize $b_1$};
        \draw[blue] (8.5,0.1+\y) node {\footnotesize $\cdots$};
        \draw[blue] (11.5,0.1+\y) node {\footnotesize $\cdots$};
        \draw[blue] (16.2,-.8+\y) node {\footnotesize $b_{g/2}$};
        \draw[->, blue] (16.1,-.7+\y) to  (15.9,\y);
        \draw[blue] (16,2.5+\y) node {\footnotesize $b_{g/2-1}$};
        \draw[->,blue] (15.5, 2.3+\y) to (15.05, 1.45+\y);
        %\draw[blue] (5.7,2.5+\y) node {\footnotesize $b_{n}$};
        %2
        \draw[red] (8.5+1.5,-2.9) node {\footnotesize $a_i$};
        \draw[red] (7,.7-3.6) node {\footnotesize $a_2$};
        \draw[blue] (5.5,.7-3.6) node {\footnotesize $b_{1}$};
        \draw[red] (3.3,0.7-3.6) node {\footnotesize $a_{1}$};
        \draw[blue] (5.7,0.1-3.7) node {\footnotesize $b_{0}$};
        \draw[red] (14.5,-.3-3.4) node {\footnotesize $a_{0}$};
        \draw[red] (13,0.7-3.6) node {\footnotesize $a_{\lfloor g/2 \rfloor}$};
        \draw[blue] (15.5,0.8-3.6) node {\footnotesize $b_{\lfloor g/2 \rfloor}$};
        \draw[blue] (8.6,-2.9) node {\footnotesize $\cdots$};
        \draw[blue] (11.6,-2.9) node {\footnotesize $\cdots$};
        % b_i
        \foreach \x in {0,3,6,9}{
        \draw[semithick,blue] (3+1.5+\x,.9+\y) to [out=90,in=130] (5.2+1.5+\x,.98+\y);
        \draw[semithick,blue] (3+1.5+\x,.9+\y) to [out=-90,in=-130] (5.2+1.5+\x,.82+\y);
        \draw[semithick,blue,dotted] (3.1+1.5+\x,.9+\y) to [out=90,in=140] (5.2+1.5+\x,.98+\y);
        \draw[semithick,blue,dotted] (3.1+1.5+\x,.9+\y) to [out=-90,in=-140] (5.2+1.5+\x,.82+\y);
        }
        
        %b_0
        	%top
        \draw[semithick,blue] (12.3-3+9,.9+\y) to [out=90,in=0] (8+9,1.7+.2+.2+\y);
        \draw[semithick,blue] (9-5.2,.98+\y) to [out=70,in=-180] (5,1.7+.2+.2+\y);
        \draw[semithick,blue] (5,1.7+.2+.2+\y) to (8+9,1.7+.2+.2+\y);
        \draw[semithick,blue,dotted] (12.2-3+9,.9+\y) to [out=90,in=0] (8+9,1.6+.2+.2+\y);
        \draw[semithick,blue,dotted] (9-5.2,.98+\y) to [out=60,in=-180] (5,1.6+.2+.2+\y);
        \draw[semithick,blue,dotted] (5,1.6+.2+.2+\y) to (8+9,1.6+.2+.2+\y);
        	%bottom
        \draw[semithick,blue] (12.3-3+9,.9+\y) to [out=-90,in=0] (8+9,.1-.2+\y-.2);
        \draw[semithick,blue] (9-5.2,.82+\y) to [out=-70,in=-180] (5,.1-.2+\y-.2);
        \draw[semithick,blue] (5,.1-.2+\y-.2) to (8+9,.1-.2+\y-.2);
        \draw[semithick,blue,dotted] (12.2-3+9,.9+\y) to [out=-90,in=0] (8+9,.2-.2+\y-.2);
        \draw[semithick,blue,dotted] (9-5.2,.82+\y) to [out=-60,in=-180] (5,.2-.2+\y-.2);
        \draw[semithick,blue,dotted] (5,.2-.2+\y-.2) to (8+9,.2-.2+\y-.2);
        %a_i
        \foreach \x in {0,3,6,9,12}{
        \draw[semithick,red] (3+\x,.9+\y) to [out=90,in=130] (5.2+\x,.98+\y);
        \draw[semithick,red] (3+\x,.9+\y) to [out=-90,in=-130] (5.2+\x,.82+\y);
        \draw[semithick,red,dotted] (3.1+\x,.9+\y) to [out=90,in=140] (5.2+\x,.98+\y);
        \draw[semithick,red,dotted] (3.1+\x,.9+\y) to [out=-90,in=-140] (5.2+\x,.82+\y);
        %a_0
        \draw[semithick,red] (14.8,2.75+\y) to [out= -95,in=95] (14.8,-.95+\y);
        \draw[semithick,red,dotted] (14.8,2.75+\y) to [out= -85,in=85] (14.8,-.95+\y);
        % b_g/2
        \draw[blue] (15+1.5,.9+\y) to [out=90,in=40] (18-5.2,.98+\y);
        \draw[blue] (15+1.5,.9+\y) to [out=-90,in=-40] (18-5.2,.82+\y);
        \draw[blue,dotted] (18-1.6,.9+\y) to [out=90,in=35] (18-5.2,.98+\y);
        \draw[blue,dotted] (18-1.6,.9+\y) to [out=-90,in=-35] (18-5.2,.82+\y);
        }
        
        %\gamma
        %\draw[semithick,black!30!green] (2.2,.9-3) to [out=90,in=180] (5.9+.75,2.1-3);
        %\draw[semithick,black!30!green] (9.8+1.5,.9-3) to [out=90,in=0] (5.9+1.5-.75,2.1-3);
        %\draw[semithick,black!30!green] (2.2,.9-3) to [out=-90,in=180] (5.9+.75,-.3-3);
        %\draw[semithick,black!30!green] (9.8+1.5,.9-3) to [out=-90,in=0] (5.9+1.5-.75,-.3-3);
        \draw[semithick,black!30!green] (4.28,-2.4) node {\footnotesize $\gamma$};
        \draw[semithick,black!30!green] (3.3,-2.1) to [out=90,in=180] (3.75,-1.8);
        \draw[semithick,black!30!green] (3.3,-2.1) to [out=-90,in=180] (3.75,-2.4);
        \draw[semithick,black!30!green] (4.2,-2.1) to [out=90,in=0] (3.75,-1.8);
        \draw[semithick,black!30!green] (4.2,-2.1) to [out=-90,in=0] (3.75,-2.4);
        \draw[semithick,black!30!green] (4.28,3-2.4+\y) node {\footnotesize $\gamma$};
        \draw[semithick,black!30!green] (3.3,3-2.1+\y) to [out=90,in=180] (3.75,3-1.8+\y);
        \draw[semithick,black!30!green] (3.3,3-2.1+\y) to [out=-90,in=180] (3.75,3-2.4+\y);
        \draw[semithick,black!30!green] (4.2,3-2.1+\y) to [out=90,in=0] (3.75,3-1.8+\y);
        \draw[semithick,black!30!green] (4.2,3-2.1+\y) to [out=-90,in=0] (3.75,3-2.4+\y);
        %
        %
        % curves for g=odd
        %a_1
       % \draw[semithick,red] (12-3+1.5,.9-3) to [out=90,in=40] (12-5.2+1.5,.98-3);
        %\draw[semithick,red] (12-3+1.5,.9-3) to [out=-90,in=-40] (12-5.2+1.5,.82-3);
        %\draw[semithick,red,dotted] (12-3.1+1.5,.9-3) to [out=90,in=30] (12-5.2+1.5,.98-3);
        %\draw[semithick,red,dotted] (12-3.1+1.5,.9-3) to [out=-90,in=-30] (12-5.2+1.5,.82-3);
        %a_1
        \draw[semithick,red] (3,.9-3) to [out=90,in=130] (5.2,.98-3);
        \draw[semithick,red] (3,.9-3) to [out=-90,in=-130] (5.2,.82-3);
        \draw[semithick,red,dotted] (3.1,.9-3) to [out=90,in=140] (5.2,.98-3);
        \draw[semithick,red,dotted] (3.1,.9-3) to [out=-90,in=-140] (5.2,.82-3);
        %a_2
        \draw[semithick,red] (3+3,.9-3) to [out=90,in=130] (5.2+3,.98-3);
        \draw[semithick,red] (3+3,.9-3) to [out=-90,in=-130] (5.2+3,.82-3);
        \draw[semithick,red,dotted] (6.1,.9-3) to [out=90,in=140] (8.2,.98-3);
        \draw[semithick,red,dotted] (6.1,.9-3) to [out=-90,in=-140] (8.2,.82-3);
        %a_i
        \foreach \x in {3,6}{
        \draw[semithick,red] (3+3+\x,.9-3) to [out=90,in=130] (5.2+3+\x,.98-3);
        \draw[semithick,red] (3+3+\x,.9-3) to [out=-90,in=-130] (5.2+3+\x,.82-3);
        \draw[semithick,red,dotted] (6.1+\x,.9-3) to [out=90,in=140] (8.2+\x,.98-3);
        \draw[semithick,red,dotted] (6.1+\x,.9-3) to [out=-90,in=-140] (8.2+\x,.82-3);
        }
        %a_0
        \draw[semithick,red] (14.8,3.15-3.4) to [out= -95,in=95] (14.8,-.55-3.4);
        \draw[semithick,red,dotted] (14.8,3.15-3.4) to [out= -85,in=85] (14.8,-.55-3.4);
        %b_i
        \foreach \x in {0,3,6,9} { 
        \draw[semithick,blue] (3+1.5+\x,.9-3) to [out=90,in=130] (5.2+1.5+\x,.98-3);
        \draw[semithick,blue] (3+1.5+\x,.9-3) to [out=-90,in=-130] (5.2+1.5+\x,.82-3);
        \draw[semithick,blue,dotted] (3.1+1.5+\x,.9-3) to [out=90,in=140] (5.2+1.5+\x,.98-3);
        \draw[semithick,blue,dotted] (3.1+1.5+\x,.9-3) to [out=-90,in=-140] (5.2+1.5+\x,.82-3);
        }
        %b_0
        	%top
        \draw[semithick,blue] (13.8-3+6,.9-3) to [out=90,in=0] (9+6,1.7+.2-3+.2);
        \draw[semithick,blue] (9-5.2,.98-3) to [out=70,in=-180] (5,1.7+.2-3+.2);
        \draw[semithick,blue] (5,1.7+.2-3+.2) to (9+6,1.7+.2-3+.2);
        \draw[semithick,blue,dotted] (13.7-3+6,.9-3) to [out=90,in=0] (9+6,1.6+.2-3+.2);
        \draw[semithick,blue,dotted] (9-5.2,.98-3) to [out=60,in=-180] (5,1.6+.2-3+.2);
        \draw[semithick,blue,dotted] (5,1.6+.2-3+.2) to (9+6,1.6+.2-3+.2);
        	%bottom
        \draw[semithick,blue] (13.8-3+6,.9-3) to [out=-90,in=0] (9+6,.1-.2-3-.2);
        \draw[semithick,blue] (9-5.2,.82-3) to [out=-70,in=-180] (5,.1-.2-3-.2);
        \draw[semithick,blue] (5,.1-.2-3-.2) to (9+6,.1-.2-3-.2);
        \draw[semithick,blue,dotted] (13.7-3+6,.9-3) to [out=-90,in=0] (9+6,.2-.2-3-.2);
        \draw[semithick,blue,dotted] (9-5.2,.82-3) to [out=-60,in=-180] (5,.2-.2-3-.2);
        \draw[semithick,blue,dotted] (5,.2-.2-3-.2) to (9+6,.2-.2-3-.2);
        }
        \draw (10,0.8) node {$\textrm{(a) $g$ even}$};
        \draw (10,-4.2) node {$\textrm{(b) $g$ odd}$};
\end{tikzpicture}
\caption{Multicurves in the closed surface $S_g$ of genus $g$}
\label{figure:Torelli}
\end{figure}

\begin{proof}
It is enough to find a pseudo-Anosov element $f_g$ in $\I_g$ such that
$\el(f_g) \leq 8/(g-12)$ for all large enough $g$.

Let us assume that $g \geq 13$ and let $n= \lfloor g/2 \rfloor$.
Let $A =\{a_0, \cdots, a_n\}$ and $B = \{b_0, \cdots, b_n \}$ be multicurves 
as in Figure \ref{figure:Torelli} 
so that $A \cup B$ fills the surface $S_g$.
Let us define a mapping class $f_g \in \Mod(S_g)$ by
\begin{displaymath}
f_g= T_B^{-1} T_A,
\end{displaymath}
where $T_A = \prod_{i=0}^n T_{a_i}$ and $T_B^{-1} = \prod_{i=0}^n T_{b_i}^{-1}$ are
multi-twists, i.e., products of Dehn twists.
Then $f_g$ is a pseudo-Anosov mapping class since it is arising from Penner's construction
(for Penner's construction, see \cite[Theorem 14.4]{FarbMargalit12} or \cite{Penner88}).
Moreover, since each curve in $A$ and $B$ is a separating curve, $f_g$ lies in the
Torelli group $\I_g$ for each $g$.

We will show that there exists a simple closed curve $\delta$ such that 
$$d_{\C}(\delta, f_g^{\lceil \frac{g}{4} \rceil -3 } (\delta)) \leq 2,$$
and this implies 
$$\el(f_g) \leq \frac{2}{\lceil \frac{g}{4} \rceil - 3} \leq
\frac{8}{g-12}.$$
To show this, we follow the same notation as in the proof of Theorem 6.1 in \cite{GadreTsai11}.
For a finite collection of curves $\{c_j \}_{j=1}^{k}  \subset A \cup B$ such that
$c_1 \cup \cdots \cup c_k$ is connected, 
let $\NN(c_1 \cdots c_k)$ be the regular neighborhood of $c_1 \cup \cdots \cup c_k$.
Consider the curve $\delta = a_i$ and its image under the iteration of $f_g$. One can see that
\begin{align*}
f_g(a_i) &\subset \NN(b_{i-1} a_i b_i)\\
f_g^{2}(a_i) &\subset \NN(b_{i-2} a_{i-1} b_{i-1} a_i b_i a_{i+1} b_{i+1})\\
f_g^{3}(a_i) &\subset \NN(b_{i-3} a_{i-2} \cdots  a_{i+2} b_{i+2})\\
&\vdots\\
f_g^{j}(a_i) &\subset \NN( b_{i-j} a_{i-j+1} \cdots a_{i+j-1} b_{i+j-1})\\
\end{align*}
as long as the inequalities $i - j \geq 1$ and $i+j-1 \leq \lceil g/2
\rceil -2$ are satisfied.  
If we choose $i = \lceil \frac{g}{4} \rceil$ and $j = \lceil
\frac{g}{4} \rceil -3$, then both inequalities are satisfied. 
In this case, both $a_i$ and $f_g^{j}(a_i)$ are disjoint from the simple closed curve
$\gamma$ in Figure \ref{figure:Torelli}.
Hence we have $d_{\C}(a_i, f_g^{\lceil \frac{g}{4} \rceil -3 }(a_i))
\leq 2$, and this completes the proof.

\end{proof}

Finally, We conclude that Theorem \ref{thm:Torelli} follows from Theorem \ref{thm:lowerTorelli} and Theorem \ref{thm:upperTorelli}.

%%%%%%%%%%%%%%%%%%%%%%%%%%%%%%%%%%%%%%%%%%%%%%%%%%%%%%%%%%%%%%%%%%%%%%%%%%%%%%%
%
%	Lower bound of PB
%
%%%%%%%%%%%%%%%%%%%%%%%%%%%%%%%%%%%%%%%%%%%%%%%%%%%%%%%%%%%%%%%%%%%%%%%%%%%%%%%
\medskip
\section{Lower bound for pure braid group}
\label{section:lowerbound}

Now we consider the pure braid group $\PB_n$ as a subgroup of $\Mod(D_n) \simeq B_n$. 
As we mentioned in the introduction, we cannot use the Lemma \ref{lem:Markovpartition}
for $\PB_n$ since it doesn't satisfy the criterion of the lemma 
(in particular, please see the definition of Lefschetz number for a punctured surface
in \cite{Tsai09}).
So to obtain the lower bound for Theorem \ref{thm:PBn}, we need to use another argument to show that there is a uniform constant $q$, 
independent of $n$, such that $M_{\mathcal{R}}^q$ is a positive matrix,
where $M_{\mathcal{R}}$ is the transition matrix for real branches of the invariant train track.

\begin{thm} \label{thm:lowerPBn}
Let $f$ be a pseudo-Anosov element in $\PB_n$. Then we have
$$\el(f) > \frac{1}{158n-168}.$$
\end{thm}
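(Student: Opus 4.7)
The plan is to deduce the theorem by applying Proposition~\ref{prop:lowerboundcriterion} to an arbitrary pseudo-Anosov $f \in \PB_n$. For the $n$-punctured disk $D_n$ we have $|\chi(D_n)| = n-1$ and the number of real branches $r$ of the Bestvina--Handel invariant train track satisfies $r \leq 3|\chi(D_n)| = 3n-3$. Proposition~\ref{prop:lowerboundcriterion} therefore yields
\[
w \;=\; 2qr + 24|\chi(D_n)| - 8n + 6|\chi(D_n)| - 2n \;\leq\; (6q+20)\,n - (6q+30),
\]
where $q$ is the smallest integer for which $M_{\mathcal{R}}^{q}$ has a positive diagonal entry. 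Hence the entire proof reduces to producing an upper bound on $q$ that is independent of $n$; tracking the target constant $158n-168$ through this inequality corresponds to showing $q \leq 23$.

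The main obstacle is that Tsai's Lefschetz-number argument (Lemma~\ref{lem:Markovpartition}), which gave $q=1$ for Torelli elements of a closed surface, does not transfer to the punctured disk $D_n$: the modified Lefschetz number for punctured surfaces in \cite{Tsai09} fails to supply the diagonal-entry conclusion. To replace it, I would exploit the defining feature of $\PB_n$: every pure braid fixes each puncture of $D_n$ individually. On the Bestvina--Handel invariant train track this rigidity means that the infinitesimal polygon surrounding each puncture is \emph{preserved} by $f$, not merely permuted with the polygons at other punctures, and $f$ acts on each such polygon by a rotation whose order is controlled by the number of prongs at that puncture. Combined with the primitivity of $M_{\mathcal{R}}$ from \cite{BestvinaHandel95}, this should force a real branch attached to one of these fixed polygons to reappear in its own forward image after a number of iterations bounded independently of $n$, yielding a cycle of uniformly bounded length in the directed graph of real branches and hence a uniform $q$.

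Once such a uniform $q$ is available, the conclusion is immediate: Proposition~\ref{prop:lowerboundcriterion} already encapsulates the Masur--Minsky nesting lemma, so it delivers $\ell_{\C}(f) \geq 1/w > 1/(158n-168)$ without further work. The hard part of the program is the combinatorial step that converts pure-braid rigidity at the punctures into an $n$-independent cycle length in the real-branch digraph; once that step is accomplished, the arithmetic $(6q+20)n - (6q+30) \leq 158n-168$ is routine. A secondary subtlety is keeping track of reducible polygons and interior singularities: one has to verify that the rotation argument survives the possible presence of interior prong singularities, which might require iterating first by a uniformly bounded power to make the action on all infinitesimal polygons trivial.
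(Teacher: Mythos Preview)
Your framework is exactly right and matches the paper: reduce to Proposition~\ref{prop:lowerboundcriterion}, observe that the arithmetic $(6q+20)n-(6q+30)=158n-168$ pins down the target $q=23$, and use the puncture-fixing property of $\PB_n$ to manufacture a uniform $q$. What is missing is the concrete combinatorial mechanism that actually forces $q\le 23$; the ``rotation'' heuristic you sketch does not do this on its own.

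The issue is twofold. First, the order of the rotation at a fixed puncture equals the number of prongs there, and nothing prevents a single puncture from having a large prong number; you have not explained why some puncture has a prong number bounded independently of $n$. Second, even after the rotation on the infinitesimal polygon becomes trivial, you have only pinned down infinitesimal branches, not real ones; to get a positive diagonal entry of $M_{\mathcal R}$ you need a \emph{real} branch to run over itself.

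The paper supplies exactly this missing step via a counting argument. Let $k_1$ (resp.\ $k_2$) be the number of punctures sitting in monogons (resp.\ bigons) of $\tau$. The Euler--Poincar\'e formula on the sphere gives $k_1 = 4 + \sum_{s\in\mathcal S}(P_s-2)\ge 4+|\mathcal S|$, and since every puncture is either $1$-pronged, $2$-pronged, or in $\mathcal S$, one has $k_1+k_2+|\mathcal S|\ge n$; together these yield $k_1\ge (n-k_2+4)/2$. Now split into cases. If $k_2<n/2$ then $k_1>n/4+2$; since there are at most $3n-3$ real branches in total, pigeonhole produces a monogon whose single cusp has at most $23$ real branches attached. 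Because $f$ fixes that puncture (hence that cusp), the image of any attached real branch must begin by traversing one of those $\le 23$ branches, so within $23$ iterates some real branch passes over itself, giving $q\le 23$. If $k_2\ge n/2$, the same pigeonhole against $3n-3$ yields a bigon with at most $11$ attached real branches and hence $q\le 11$. Either way $q\le 23$, and your arithmetic then finishes the proof. The point to internalize is that the uniform bound comes not from the rotation period but from the pigeonhole on the \emph{total} number of real branches against a linear-in-$n$ supply of monogons/bigons guaranteed by Euler--Poincar\'e.
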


\begin{proof}
When we obtain $\tau$ using the algorithm in \cite{BestvinaHandel95}, 
we can start from a graph $\Gamma$ in $D_n$ as in Figure \ref{figure:BH}.
%that is an embedded copy of a rose $R_n$,
%i.e., a bouquet of $n$ circles (see Figure \ref{figure:BH}). 
The train track $\tau$ fills the surface $D_n$, that is,
the complement $D_n \setminus \tau$ is the union of (topologically) disks,
once-punctured disks, or boundary-parallel annulus.
Therefore each puncture is contained in a distinct ideal polygon, including a monogon and a bigon.
Note that a puncture in a monogon and a bigon is 
a 1-pronged singularity and a regular point, respectively,
of the invariant foliation of the pseudo-Anosov $f$ (see Figure \ref{figure:monogonbigon}).
We will show that the integer $q$ in Proposition \ref{prop:lowerboundcriterion} can be chosen
to be a uniform constant, independent of $n$.

\begin{figure}[t]
\centering
\begin{tikzpicture}[scale=.45]
		% disk
        \foreach \x in {0,15}{
        \draw[semithick] (6+\x,0) to [out=90,in=0] (0+\x,3);
        \draw[semithick] (-6+\x,0) to [out=90,in=-180] (0+\x,3);
        \draw[semithick] (-6+\x,0) to [out=-90,in=-180] (0+\x,-3);
        \draw[semithick] (6+\x,0) to [out=-90,in=0] (0+\x,-3);
        }
        % punctures in D1
        \foreach \x in {1,2,3,4,5}{
        	\draw ({2*\x-6},0) node {$\circ$};
        	\draw[semithick,red] ({2*\x-6},0) circle (.5);
        }
        \foreach \x in {3,4,5}
        	\draw[semithick,red] (0,-2) to [out=0,in=-90] ({2*\x-6},-.5);
        \foreach \x in {1,2}
        	\draw[semithick,red] (0,-2) to [out=180,in=-90] ({2*\x-6},-.5);
        % punctures in D2
        \foreach \x in {1,2,3,4,5}
        	\draw ({2*\x+9},0) node {$\circ$};
        %\foreach \x in {-4,-2,0,2,4}  {
        %1
        \draw[semithick,red] (14.5-4,0) to [out=90, in=180] (15-4,.5);
        \draw[semithick,red] (15-4,0.5) to [out=0, in=90] (15.5-4,0);
        \draw[semithick,red] (15-4,-1) to [out=90,in=-90] (14.5-4,0);
        \draw[semithick,red] (15-4,-1) to [out=90,in=-90] (15.5-4,0);
        %5
        \draw[semithick,red] (14.5+4,0) to [out=-90, in=-180] (15+4,-.5);
        \draw[semithick,red] (15+4,-0.5) to [out=0, in=-90] (15.5+4,0);
        \draw[semithick,red] (15+4,1) to [out=-90,in=90] (14.5+4,0);
        \draw[semithick,red] (15+4,1) to [out=-90,in=90] (15.5+4,0);
        %2
        \draw[semithick,red] (14.5-2,0) to [out=90, in=180] (15-2,.5);
        \draw[semithick,red] (14.5-2,0) to [out=-90, in=180] (15-2,-.5);
        \draw[semithick,red] (15-2,0.5) to [out=0, in=180] (14,0);
        \draw[semithick,red] (15-2,-0.5) to [out=0, in=180] (14,0);
        %4
        \draw[semithick,red] (15.5+2,0) to [out=90, in=0] (15+2,.5);
        \draw[semithick,red] (15.5+2,0) to [out=-90, in=0] (15+2,-.5);
        \draw[semithick,red] (15+2,0.5) to [out=180, in=0] (16,0);
        \draw[semithick,red] (15+2,-0.5) to [out=180, in=0] (16,0);
        %3
        \draw[semithick,red] (14,0) to [out=0, in=-90] (15,1);
        \draw[semithick,red] (15,1) to [out=-90, in=180] (16,0);
        \draw[semithick,red] (15,-1) to [out=90,in=0] (14,0);
        \draw[semithick,red] (15,-1) to [out=90,in=180] (16,0);
        % real branches
        \draw[semithick,red] (15,-1) to [out=-90, in=-90] (11,-1);
        \draw[semithick,red] (15,1) to [out=90, in=90] (19,1);
        \draw (-4.5, 1) node {$\Gamma$};
        \draw (12.5, 1) node {$\tau$};
\end{tikzpicture}
\caption{An embedded graph in $D_n$ and a train track by Bestvina--Handel algorithm}
\label{figure:BH}
\end{figure}
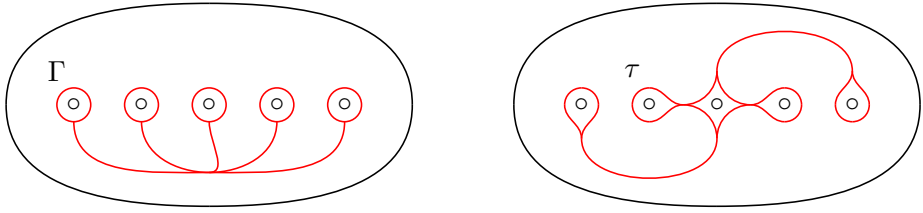

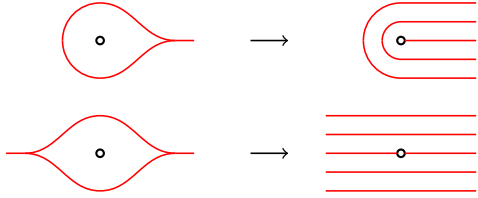
\begin{figure}[t]
\centering
\begin{tikzpicture}[scale=.5]
		% monogon
        \draw[thick] (-4,0) circle [radius=0.1];
        \draw[semithick,red] (-5,0) to [out=90, in=180] (-4,1);
        \draw[semithick,red] (-5,0) to [out=-90, in=180] (-4,-1);
        \draw[semithick,red] (-4,1) to [out=0, in=180] (-2,0);
        \draw[semithick,red] (-4,-1) to [out=0, in=180] (-2,0);
        \draw[semithick,red] (-2,0) to (-1.5,0);
        \draw[semithick,->] (0,0) to (1,0);
        \draw[thick] (4,0) circle [radius=0.1];
        \draw[semithick,red] (3,0) to [out=90, in=180] (4,1);
        \draw[semithick,red] (3,0) to [out=-90, in=180] (4,-1);
        \draw[semithick,red] (3.5,0) to [out=90, in=180] (4,.5);
        \draw[semithick,red] (3.5,0) to [out=-90, in=180] (4,-.5);
        \draw[semithick,red] (4.1,0) to (6,0);
        \draw[semithick,red] (4,0.5) to (6,0.5);
        \draw[semithick,red] (4,-0.5) to (6,-0.5);
        \draw[semithick,red] (4,1) to (6,1);
        \draw[semithick,red] (4,-1) to (6,-1);
        %bigon
        \foreach \x in {3}{
        \draw[thick] (-4,0-\x) circle [radius=0.1];
        \draw[semithick,red] (-6,0-\x) to [out=0, in=180] (-4,1-\x);
        \draw[semithick,red] (-6,0-\x) to [out=0, in=180] (-4,-1-\x);
        \draw[semithick,red] (-4,1-\x) to [out=0, in=180] (-2,0-\x);
        \draw[semithick,red] (-4,-1-\x) to [out=0, in=180] (-2,0-\x);
        \draw[semithick,red] (-2,0-\x) to (-1.5,0-\x);
        \draw[semithick,red] (-6,0-\x) to (-6.5,0-\x);
        \draw[semithick,->] (0,0-\x) to (1,0-\x);
        \draw[thick] (4,0-\x) circle [radius=0.1];
        \draw[semithick,red] (4.1,0-\x) to (6,0-\x);
        \draw[semithick,red] (3.9,0-\x) to (2,0-\x);
        \draw[semithick,red] (2,0.5-\x) to (6,0.5-\x);
        \draw[semithick,red] (2,-0.5-\x) to (6,-0.5-\x);
        \draw[semithick,red] (2,1-\x) to (6,1-\x);
        \draw[semithick,red] (2,-1-\x) to (6,-1-\x);
        }
        \end{tikzpicture}
\caption{Monogon, bigon, and their foliations}
\label{figure:monogonbigon}
\end{figure}

Let $k_1$ be the number of punctures contained in monogons of $\tau$, and let $k_2$ be the number of punctures contained in bigons of $\tau$. 
Also, let $\mathcal{S}$ be the set of singularities of the foliation whose index,
i.e., the number of separatrices, is greater than or equal to 3.
By the Euler--Poincar\'e formula (see \cite{FLP}),
$$k_1 + \sum_{s \in \mathcal{S}} (2-P_s) = 2 \chi(S^2) = 4,$$
where $P_s$ is the index of the singularity $s \in \mathcal{S}$ of the foliation.
Since $P_s \geq 3$, we have
$$k_1 = 4 + \sum_{s \in \mathcal{S}} (P_s-2) \geq 4 + |\mathcal{S}|.$$

On the other hand, since each puncture of $D_n$ either is a 1-pronged singularity, is contained in a bigon, or lies in $\mathcal{S}$, we have 
that $k_1 + k_2  + |\mathcal{S}| \geq n$. Combining the two inequalities, we obtain
$$ k_1 \geq \frac{4+n - k_2}{2}. $$

\textbf{Case 1.} Suppose $k_2 < \frac{1}{2}n$. Then $k_1 > \frac{4+n - n/2}{2} = \frac{1}{4}n + 2$. 
Now we show that there is a monogon in $\tau$ where at most 23 real branches are attached at the cusp of the monogon. 
This implies that the integer $q$ in Proposition \ref{prop:lowerboundcriterion} can be chosen
to be 23.
It is because $f \in \PB_n$ fixes each puncture and hence
for each real branch $\beta$ attached to this monogon,
$f(\beta)$ must passes through one of the other attached real branches.
If there are at most 23 attached real branches, then $f^{23}(\beta)$ must passes through $\beta$.

Suppose on the contrary that each monogon in $\tau$ containing a puncture
has at least 24 real branches attached.
Then there are at least $12k_1 \geq 3n + 24$ real branches.
However, as we explained in Section \ref{section:traintrack}, there are at most $3|\chi(D_n)| = 3n-3$ real branches, which is a contradiction.

\textbf{Case 2.} Suppose $k_2 \geq \frac{1}{2}n$. 
The idea is very similar to case 1. Suppose there are at least 12 real branched attached at the cusps of each bigon of $\tau$ 
containing a puncture. Then there are at least $6k_2 \geq 3n$ real branches in $\tau$ which is again a contradiction. This implies that there is at least one 
bigon with at most 11 real branched attached. Hence by the same logic, one can take $q$ to be $11$. 

In conclusion, for either case 1 or case 2, one can choose $q=23$.
By Proposition \ref{prop:lowerboundcriterion}, we have $w \leq 158n-168$. This finishes the proof. 

\end{proof}

\subsection*{Remark}
We don't expect that the lower bounds of Theorem \ref{thm:lowerTorelli}
and Theorem \ref{thm:lowerPBn} are optimal.
We emphasize that the main purpose of this paper is to describe the asymptotic behavior
of the asymptotic translation length of pseudo-Anosov elements in $\I_g$ and $\PB_n$.

Another quick remark is about the the Euler--Poincar\'e formula
used in the proof above. In \cite{FLP}, the formula is only stated
for compact surfaces. By replacing the puncture with a boundary
component, a $p$-pronged singularity at a puncture is equivalent to 
$p$-many $3$-pronged singularities on the boundary of type
(B) (see section 5.1 of \cite{FLP} for the definition).
Hence the Euler--Poincar\'e formula for a compact surface with boundary
immediately yields the version that we used in this paper.

%%%%%%%%%%%%%%%%%%%%%%%%%%%%%%%%%%%%%%%%%%%%%%%%%%%%%%%%%%%%%%%%%%%%%%%%%%%%%%%
%
%	upper bound of PB
%
%%%%%%%%%%%%%%%%%%%%%%%%%%%%%%%%%%%%%%%%%%%%%%%%%%%%%%%%%%%%%%%%%%%%%%%%%%%%%%%

\medskip
\section{Upper bound of pure braid group} \label{section:upperbound_PB}

Obtaining the upper bound of Theorem \ref{thm:PBn} is completely analogous to 
the proof of Theorem \ref{thm:upperTorelli}.

\begin{thm} 
\label{thm:upperPBn}
For all $n \geq 4$, 
$$L_{\C}(\PB_n) \leq \dfrac{2}{n-3}. $$
\end{thm}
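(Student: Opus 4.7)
The plan is to give an explicit construction in complete analogy with Theorem \ref{thm:upperTorelli}, producing for each $n \geq 4$ a pseudo-Anosov pure braid $f_n \in \PB_n$ with $\el(f_n) \leq 2/(n-3)$. The key simplification here is that every Dehn twist in $\Mod(D_n)$ automatically lies in $\PB_n$, since a Dehn twist fixes every puncture of $D_n$ pointwise; consequently any product of Dehn twists we write down is already a pure braid, and we do not need to restrict to separating curves as in the Torelli case.

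First I would place the $n$ punctures of $D_n$ along a horizontal diameter and choose two multicurves $A = \{a_0, \ldots, a_k\}$ and $B = \{b_0, \ldots, b_k\}$ of pairwise disjoint simple closed curves whose union fills $D_n$. A natural choice is to take the $a_i$ and $b_i$ to be simple loops enclosing consecutive pairs of punctures, arranged in a staggered chain so that adjacent members of $A$ and $B$ intersect in two points, together with one or two extra curves enclosing many punctures so as to kill the boundary-parallel annular region of $D_n$. Since $A \cup B$ fills, Penner's construction (Theorem 14.4 of \cite{FarbMargalit12}) guarantees that
$$ f_n := T_B^{-1} T_A $$
is pseudo-Anosov, and it lies in $\PB_n$ by the observation above.

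Next I would select a starting curve $\delta = a_i$ at one end of the chain and, by induction on $j$, show that $f_n^j(\delta)$ is contained in the regular neighborhood $\NN(c_1 c_2 \cdots c_m)$ of a connected subchain of $A \cup B$ that grows by a controlled amount per iteration, mirroring the Torelli computation $f_g^j(a_i) \subset \NN(b_{i-j} a_{i-j+1} \cdots a_{i+j-1} b_{i+j-1})$. As long as this subchain has not yet swept across all of $A \cup B$, a small loop $\gamma$ around a puncture on the untouched side is disjoint from both $\delta$ and $f_n^j(\delta)$, which yields $d_{\C}(\delta, f_n^j(\delta)) \leq 2$.

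A direct count of the admissible values of $j$ then gives $\el(f_n) \leq 2/(n-3)$. The main obstacle I anticipate is calibrating the precise chain, starting curve, and iteration count so that the maximum $j$ for which the above argument applies is exactly $n-3$: the length of the chain and the rate at which the iterated neighborhood spreads must be aligned, and one must also verify that $A \cup B$ genuinely fills out to $\partial D_n$. This is a matter of careful bookkeeping rather than new ideas, once the template from Theorem \ref{thm:upperTorelli} is in hand.
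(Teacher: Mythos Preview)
Your approach is essentially the paper's: a Penner-type product of twists along the chain $a_1,\dots,a_{n-1}$ of curves each enclosing two consecutive punctures, starting from $a_1$ and tracking the spreading neighborhood. Two small corrections are worth making. First, the extra curves ``enclosing many punctures'' are unnecessary and would spoil the count: for Penner's construction on a surface with boundary, a boundary-parallel annulus in the complement is allowed, so the chain $a_1,\dots,a_{n-1}$ already fills $D_n$; adding a large curve that meets $a_1$ would cause $f_n(a_1)$ to spread immediately and you would lose the linear growth. Second, a small loop around a single puncture is \emph{not} a vertex of $\C(D_n)$, since it bounds a once-punctured disk; the paper instead takes $\gamma$ to be the essential curve enclosing $p_1,\dots,p_{n-1}$, which is disjoint from both $a_1$ and $f_n^{n-3}(a_1)\subset\NN(a_1\cdots a_{n-2})$.
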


\begin{proof}
It suffices to find a pseudo-Anosov element $f_n$ in $\PB_n$ such that
$\el(f_n) \leq \dfrac{2}{n-3}$.
 Let us assume that $n \geq 4$. Label the punctures
of $D_n$ with $p_1, \cdots, p_n$, 
and for each $i$, consider the simple closed curves
$a_i$ bounding two puncture $p_i$ and $p_{i+1}$ shown in Figure \ref{figure:sscs_Dn}. 

We consider a mapping class $f_n \in \Mod(D_n)$ given by
$$f_n = \prod_{i=1}^{n-1} T_{a_i}^{\epsilon_i},$$
where $\epsilon_i = (-1)^{i+1}$. 
Then $f_n$ is a pseudo-Anosov element since it is arising from Penner's construction.
Moreover, $f_n \in \PB_n$ because each curve $a_i$ is a separating curve.

\begin{figure}[t]
\centering
\begin{tikzpicture}[scale=.5]
		% disk
        \draw[semithick] (6,0) to [out=90,in=0] (0,3);
        \draw[semithick] (-6,0) to [out=90,in=-180] (0,3);
        \draw[semithick] (-6,0) to [out=-90,in=-180] (0,-3);
        \draw[semithick] (6,0) to [out=-90,in=0] (0,-3);
        % punctures
        \foreach \x in {1,2,3}
        	\draw ({2*\x-6.2},0) circle [radius=.1];
        \foreach \x in {4,5}
        	\draw ({2*\x-5.8},0) circle [radius=.1];
        \draw (-4.2,-0.5) node {$p_{1}$};
        \draw (-2.2,-0.5) node {$p_{2}$};
        \draw (4.2,-0.5) node {$p_{n}$};
        \draw (1.1,-0.7) node {$\cdots$};
        %Multicurves
        %red curves
        \foreach \x in {0,1}
        \draw[semithick,red] (4*\x-5,0) to [out=-90,in=-180] (4*\x-3.2,-1.3);
        \foreach \x in {0,1}
        \draw[semithick,red] (4*\x-5,0) to [out=90,in=-180] (4*\x-3.2,1.3);
        \draw[semithick,red] (-1.4,0) to [out=90,in=0] (-3.2,1.3);
        \draw[semithick,red] (-1.4,0) to [out=-90,in=0] (-3.2,-1.3);
        \draw[semithick,red] (4.4-1.2,0) to [out=90,in=0] (4.4-3,1.3);
        \draw[semithick,red] (4.4-1.2,0) to [out=-90,in=0] (4.4-3,-1.3);
        \draw[red] (-3.2,1.8) node {$a_1$};
        \draw[red] (0.5,1.8) node {$a_3$};
        % blue_1
        \draw[semithick,blue] (0.6,0) to [out=90,in=0] (-1.2,1.3);
        \draw[semithick,blue] (-3,0) to [out=90,in=-180] (-1.2,1.3);
        \draw[semithick,blue] (-3,0) to [out=-90,in=-180] (-1.2,-1.3);        
        \draw[semithick,blue] (+0.6,0) to [out=-90,in=0] (-1.2,-1.3);
        % blue_last
        \draw[semithick,blue] (4+1,0) to [out=90,in=0] (4.2-1,1.3);
        \draw[semithick,blue] (4-2.6,0) to [out=90,in=-180] (4.2-1,1.3);
        \draw[semithick,blue] (4-2.6,0) to [out=-90,in=-180] (4.2-1,-1.3);
        \draw[semithick,blue] (4+1,0) to [out=-90,in=0] (4.2-1,-1.3);
        \draw[blue] (-1.2,-1.8) node {$a_2$};
        \draw[blue] (4,-1.7) node {$a_{n-1}$};
        % train track
        %\foreach \x in {0.5}{
        %\draw[semithick] (\x+9.5,2) to (\x+9.5,-2);
        %\draw[semithick] (\x+7.5,0) to (\x+11.5,0);
        %\draw[semithick,->] (\x+12,0) to (\x+13,0);
        %\draw[semithick] (\x+13.5,0) to (\x+17.5,0);
        %\draw[semithick] (\x+15,2) to (\x+15,.5);
        %\draw[semithick] (\x+15,.5) [out=-90, in=180] to (\x+15.5,0) ;
        %\draw[semithick] (\x+16,-2) to (\x+16,-.5);
        %\draw[semithick] (\x+16,-.5) [out=90, in=0] to (\x+15.5,0) ;        
        %\draw (\x+8,0.3) node {$\alpha_{2i+1}$};
        %\draw (\x+10.3,-2) node {$\alpha_{2i}$};
        %\draw (\x+15.3,-2) node {$\tau$};
        %}
        % gamma
        \draw[semithick,black!30!green] (-5.5,0) to [out=90,in=-180] (0,2.5);        
        \draw[semithick,black!30!green] (-5.5,0) to [out=-90,in=-180] (0,-2.5);
        \draw[semithick,black!30!green] (3.5,0) to [out=90,in=0] (0,2.5);
        \draw[semithick,black!30!green] (3.5,0) to [out=-90,in=0] (0,-2.5);
        \draw[black!30!green] (2.3,-2.4) node {$\gamma$};
\end{tikzpicture}
\caption{Simple closed curves in $D_n$}
\label{figure:sscs_Dn}
\end{figure}
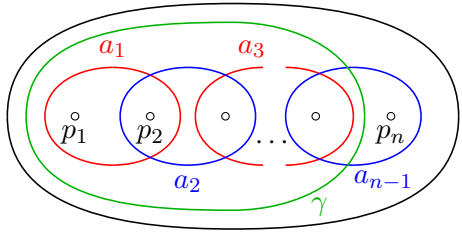

We claim that $d_{\C}(a_1, f_n^{n-1}(a_1)) \leq 2$.
We use the same notation as in the proof of Theorem \ref{thm:upperTorelli}.
Under the iteration of $f_n$, one can see that
\begin{align*}
f_n(a_1) &\subset \NN(a_1a_2)\\
f_n^{2}(a_1) &\subset \NN(a_1a_2a_3)\\
&\vdots\\
f_n^{n-3}(a_1) &\subset \NN(a_1a_2 \cdots a_{n-2}).
\end{align*}
Therefore, we can see that both $a_1$ and $f_n^{n-3}(a_1)$ are disjoint from
the simple closed curve $\gamma$ in Figure \ref{figure:sscs_Dn}.
Therefore, we have
$d_{\C}(a_1, f_n^{n-3}(a_1)) \leq 2$ and
$$\el(f) \leq \frac{2}{n-3}.$$

\end{proof}

Finally, Theorem \ref{thm:PBn} follows from Theorem \ref{thm:lowerPBn} and Theorem \ref{thm:upperPBn}.

%%%%%%%%%%%%%%%%%%%%%%%%%%%%%%%%%%%%%%%%%%%%%%%%%%%%%%%%%%%%%%%%%%%%%%%%%%%%%%%
%
%	Pure mapping class groups
%
%%%%%%%%%%%%%%%%%%%%%%%%%%%%%%%%%%%%%%%%%%%%%%%%%%%%%%%%%%%%%%%%%%%%%%%%%%%%%%%

\medskip
\section{Pure mapping class groups} \label{section:PMod}

In this section, we briefly discuss the case of the pure mapping class
group $\PMod(S_{g,n})$.
Theorem 1.3 of \cite{Valdivia14} showed that the minimal
translation length of the mapping class group $\Mod(S_{g,n})$ behaves
like $1/n$ when $g$ is fixed and $g \geq 2$. The lower bound can be
computed using the same method as in Section
\ref{section:Torelli_lower} and to get the negativity of Lefschetz number,
the assumption that $g \geq 2$ is required. For the upper bound,   
Valdivia constructed an appropriate sequence of pseudo-Anosov
maps in Section 4 of \cite{Valdivia14}. One can observe that 
these are in fact contained in
$\PMod(S_{g,n})$. Hence, Theorem 1.3 of \cite{Valdivia14} and its
proof directly 
imply Theorem \ref{thm:PMod} for $g \geq 2$.

The case $g=0$ is our Theorem \ref{thm:PBn}. For the rest of this
section, we have shown that the computation in Section \ref{section:lowerbound} can be
easily adapted to the case of $\PMod(S_{g,n})$ for any fixed $g \geq 0$.

\begin{thm} \label{thm:lowerPMod} 
Let $g \geq 0$ be fixed. For a pseudo-Anosov element $f \in \PMod(S_{g,n})$, 
$$\ell_{\C}(f) \geq \frac{1}{1296g +638n - 1296}, $$
provided that $n > 38g - 38$.
\end{thm}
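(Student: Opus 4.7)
The plan is to adapt the proof of Theorem \ref{thm:lowerPBn} to the setting of $\PMod(S_{g,n})$ for fixed $g \geq 0$. By Proposition \ref{prop:lowerboundcriterion}, it suffices to exhibit an integer $q$ independent of $n$ such that $M_{\mathcal{R}}^q$ has a positive diagonal entry, since the real-branch count satisfies $r \leq 3|\chi(S_{g,n})| = 6g - 6 + 3n$ and the remaining terms of $w$ are affine in $n$ once $g$ is fixed. Given such a $q$, direct substitution into $w = 2qr + 30|\chi(S_{g,n})| - 10n$ then produces the desired lower bound for $\ell_{\C}(f)$.

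Let $\tau$ be the invariant train track of a pseudo-Anosov $f \in \PMod(S_{g,n})$ obtained from the Bestvina--Handel algorithm, and let $k_1$ (resp.\ $k_2$) denote the number of punctures of $S_{g,n}$ contained in monogon (resp.\ bigon) complementary regions of $\tau$. Extending the invariant foliation to the closed surface $S_g$ by filling in the punctures (as justified in the remark following the proof of Theorem \ref{thm:lowerPBn}), the Euler--Poincar\'e formula reads
$$k_1 + \sum_{s \in \mathcal{S}} (2 - P_s) = 2\chi(S_g) = 4 - 4g,$$
where $\mathcal{S}$ is the set of singularities of index at least $3$. Since each $P_s \geq 3$, this gives $k_1 \geq 4 - 4g + |\mathcal{S}|$, and combining with the inequality $k_1 + k_2 + |\mathcal{S}| \geq n$ (every puncture is either a $1$-pronged singularity, a regular point inside a bigon, or a singularity in $\mathcal{S}$) yields
$$k_1 \geq \frac{n + 4 - 4g - k_2}{2}.$$

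As in the proof of Theorem \ref{thm:lowerPBn} I would then split into two cases. If $k_2 < n/2$ then $k_1 > 2 - 2g + n/4$, so if every puncture-containing monogon carried more than some cutoff $N$ real branches at its cusp, the resulting lower bound $\tfrac{1}{2} N k_1$ on the number of real branches would exceed $6g - 6 + 3n$, a contradiction; the hypothesis $n > 38g - 38$ is precisely what is needed to make this contradiction go through for a suitable choice of $N$. Hence some monogon has at most $N$ branches attached, and since $f$ fixes the corresponding puncture it preserves this monogon and acts as a self-map on its attached branches, so a pigeonhole argument produces a uniform $q \leq N$ with $(M_{\mathcal{R}}^q)_{\beta,\beta} > 0$ for some attached branch $\beta$. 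The case $k_2 \geq n/2$ is handled analogously with bigons in place of monogons. The main technical obstacle is the bookkeeping of constants: one must calibrate the monogon cutoff $N$ in the first case and the bigon cutoff in the second so that both case-by-case contradictions activate precisely at $n > 38g - 38$, and so that the resulting uniform $q$ produces exactly the stated denominator $1296g + 638n - 1296$ after substitution into Proposition \ref{prop:lowerboundcriterion}.
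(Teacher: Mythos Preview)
Your proposal is correct and follows essentially the same approach as the paper's own proof. The paper carries out the constant calibration you leave open: in Case~1 the monogon cutoff is taken to be $32$ (so that $16k_1 > 4n+32-32g > 3|\chi(S_{g,n})|$ under the hypothesis $n>38g-38$, giving $q=31$), in Case~2 the bigon cutoff is $16$ (giving $q=15$), and the uniform choice $q=31$ is then substituted into Proposition~\ref{prop:lowerboundcriterion}.
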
  

\begin{proof}
Let $f \in \PMod(S_{g,n})$ and let $\tau$ be an invariant train track of
$f$ obtained by Bestvina--Handel algorithm. 
Recall that we only need to compute $q$ so that $M_\mathcal{R}^q$ has
a positive diagonal entry. Here we follow the same notations of the proof of Theorem \ref{thm:lowerPBn}.
The Euler-Poincar\'e formula says 
$$k_1 + \sum_{s \in \mathcal{S}} (2-P_s) = 2 \chi(S_g) = 4-4g.$$
Combining with $k_1 + k_2 + |\mathcal{S}| \geq n$, we
obtain $$ k_1 \geq \frac{n - k_2 + 4 - 4g}{2}.$$

We divide this into two cases as in Section 5. 

\textbf{Case 1.} Suppose $k_2 < \frac{1}{2}n$. Then $k_1 > \frac{1}{4}n + 2 - 2g$. 
Now we show that there is a monogon in $\tau$ where at most 31 real branches are attached at the cusp of the monogon. 
As before, this implies that the integer $q$ in Proposition \ref{prop:lowerboundcriterion} can be chosen
to be 31.

Suppose on the contrary that each monogon in $\tau$ containing a puncture
has at least 32 real branches attached.
Then there are at least $16k_1 \geq 4n + 32-32g$ real branches.
For all $n > 38g-38$, we have $4n + 32-32g > 3n + (38g-38) - (32g-32) = 3n + 6g - 6$. 
This is a contradiction because the number of real branches is at most $3|\chi(S_{g,n})| = 3n + 6g - 6$.

\textbf{Case 2.} Suppose $k_2 \geq \frac{1}{2}n$. 

Suppose further that each of $k_2$ bigons has at least 16 real branches attached. 
Then there are at least $8k_2 \geq 4n$ real branches in $\tau$, and $4n > 3n + 6g - 6$ for all $n > 6g - 6$. 
This implies that there is at least one bigon with at most 15 real branched attached. Hence by the same logic, one can take $q$ to be $15$. 

In conclusion, for either case 1 or case 2, one can choose $q=31$.
By Proposition \ref{prop:lowerboundcriterion}, we have $w \leq 1296g + 638n - 1296$. 
\end{proof}

When $g > 1$, of
course one gets a better lower bound just using the Lefschetz number
argument as in \cite{Valdivia14}, but 
the above proof can be adopted for all $g \geq 0$.

%%%%%%%%%%%%%%%%%%%%%%%%%%%%%%%%%%%%%%%%%%%%%%%%%%%%%%%%%%%%%%%%%%%%%%%%%%%%%%%
%
%	References
%
%%%%%%%%%%%%%%%%%%%%%%%%%%%%%%%%%%%%%%%%%%%%%%%%%%%%%%%%%%%%%%%%%%%%%%%%%%%%%%%

\medskip
\bibliographystyle{alpha} 
\bibliography{purebraid}

\end{document}